\newtheorem{theorem}{Theorem}[section]
\newtheorem{lem}[theorem]{Lemma}
\newtheorem{prop}[theorem]{Proposition}
\newtheorem{cor}[theorem]{Corollary}
\theoremstyle{definition}
\newtheorem{definition}[theorem]{Definition}
\theoremstyle{remark}
\numberwithin{equation}{section}
\newcommand{\Hor}{{\mathcal{H}}}
\newcommand{\V}{{\mathcal{V}}}
\newcommand{\ra}{\rightarrow}
\newcommand{\C}{{\mathbb{C}}}
\newcommand{\HH}{{\mathbb{H}}}
\newcommand{\CP}{{\mathbb{CP}}}
\newcommand{\HP}{{\mathbb{HP}}}
\newcommand{\KK}{\mathbb{K}}
\newcommand{\KP}{\mathbb{KP}}
\newcommand{\Z}{\mathbb{Z}}
\newcommand{\lb}{\langle}
\newcommand{\rb}{\rangle}
\newcommand{\mg}{\mathfrak{g}}
\newcommand{\mh}{\mathfrak{h}}
\newcommand{\mk}{\mathfrak{k}}
\newcommand{\mm}{\mathfrak{m}}
\renewcommand{\mp}{\mathfrak{p}}
\newcommand{\R}{\mathbb{R}}
\newcommand{\GG}{G_k(\KK^N)}
\newcommand{\GR}{G_k(\R^N)}
\newcommand{\Gf}{F_k(\R^N)}
\begin{document}

\newcommand{\spacing}[1]{\renewcommand{\baselinestretch}{#1}\large\normalsize}
\spacing{1.14}

\title{On pull-backs of the universal connection}

\author {Kristopher Tapp}
\begin{abstract}
Narasihman and Ramanan proved in~\cite{NR} that an arbitrary connection in a vector bundle over a base space $B$ can be obtained as the pull-back (via a correctly chosen classifying map from $B$ into the appropriate Grassmannian) of the universal connection in the universal bundle over the Grassmannian.  The purpose of this paper is to relate geometric properties of the classifying map to geometric properties of the pulled-back connection.  More specifically, we describe conditions on the classifying map under which the pulled-back connection: (1) is fat (in the sphere bundle), (2) has a parallel curvature tensor, and (3) induces a connection metric with nonnegative sectional curvature on the vector bundle (or positive sectional curvature on the sphere bundle).
\end{abstract}

\maketitle

\section{introduction}
Let $B$ denote an $n$-dimensional compact Riemannian manifold, let $\KK\in\{\R,\C,\HH\}$, and let $k\in\Z^+$.  To classify the $\KK^k$-vector bundles over $B$, one considers the Grassmannian of $k$-dimensional $\KK$-linear subspaces in $\KK^N$, denoted $\GG$,
for sufficiently large $N$.  The \emph{universal vector bundle} over $\GG$ has total space $\{(\sigma,V)\mid\sigma\in\GG, V\in\sigma\}$ and projection map $(\sigma,V)\mapsto\sigma$.  It is well-known that every $\KK^k$-bundle over $B$ is the pull-back of this universal vector bundle via some \emph{classifying map} $\varphi:B\ra\GG$, and that the homotopy class of $\varphi$ determines the isomorphism class of the pulled-back bundle.

This universal vector bundle has a natural connection defined such that the covariant derivative of a section $t\mapsto(\sigma(t),V(t))$ equals the section $t\mapsto\left(\sigma(t),V'(t)^{\sigma(t)}\right)$, where the superscript denotes the orthogonal projection onto the subspace.  Narasimhan and Ramanan proved in~\cite{NR} (see also~\cite{Schlafly}) that for sufficiently large $N$, this connection is \emph{universal} in the sense that every connection in the pulled-back bundle over $B$ can be obtained as a pullback of this universal connection by correctly choosing $\varphi$ within the homotopy class representing the bundle.  This theorem has seen many abstract applications within mathematics and physics, but to the best of our knowledge, it has never been used to prove or disprove the existence of connections in vector bundles with specific desirable geometric properties.

Let $\varphi:B\ra\GG$ be an explict classifying map, let $\pi_E:E\ra B$ denote the pulled-back bundle, let $\nabla$ denote the pulled-back connection, and let $R^\nabla$ denote its curvature tensor.  Let $\pi_1:E^1\ra B$ denote the sphere-bundle over $B$ formed from all unit-length vectors in $E$.

The first property we will study is \emph{fatness}, which was defined by Weinstein in~\cite{Weinstein} and studied by many authors (see~\cite{Ziller} for a survey).  To define fatness, it is useful to select a metric on $B$ and a rotationally symmetric ``fiber metric'' on $\KK^k$.  Together with $\nabla$, these choices induce a unique \emph{connection metric} on $E$ (and thus also on $E^1$).  The connection $\nabla$ called \emph{fat} if the sectional curvatures of all $\pi_1$-vertizontal planes are positive.  Notice that we're calling $\nabla$ \emph{fat} when the induced connection in the sphere bundle is fat as defined by Weinstein.  This fatness condition turns out not to depend on the choice of base metric or fiber metric, so fatness is a property only of the connection.

In Section~\ref{S:calc}, we will describe the general condition on $\varphi$ under which $\nabla$ is fat.  In the special case where $k=1$ and either $\KK=\C$ (so that $\varphi:B\ra\CP^{N-1}$) or $\KK=\HH$ (so that $\varphi:B\ra\HP^{N-1}$), fatness turns out to be equivalent to $\varphi(B)$ having bounded Wirtinger angles, defined as follows:
\begin{definition}\label{DD} Let $\KK\in\{\C,\HH\}$ and let $\mathcal{J}$ denote...
\begin{itemize}
\item[...]$\{J\}$ if $\KK=\C$, where $J$ denotes the standard almost complex structure on $\CP^N$,
\item[...]$\{I,J,K\}$ if $\KK=\HH$, where $\{I,J,K\}$ denotes a local coordinate expressions for the standard almost quaternionic structure on $\HP^N$.
\end{itemize}
Let $S\subset\KK\mathbb{P}^N$ be an immersed submanifold, $p\in S$ and $X\in T_pS$.  The \emph{Wirtinger angle}, $\theta(X)\in[0,\pi/2]$, is the maximum angle that a vector in $\text{span}_\R\{\mathfrak{J}X\mid\mathfrak{J}\in\mathcal{J}\}$ makes with $T_pS$.  If $\theta(X)<\pi/2$ for all $X\in TS$, then $S$ is said to have \emph{bounded Wirtinger angles}.
\end{definition}

\begin{theorem} \label{P:fat}If $k=1$ and $\KK\in\{\C,\HH\}$, then $\nabla$ is fat if and only if $\varphi$ is an immersion with bounded Wirtinger angles.
\end{theorem}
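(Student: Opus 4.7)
My plan is to convert fatness, a statement about vertizontal sectional curvatures in $E^1$, into an algebraic nondegeneracy condition on the pulled-back curvature $R^\nabla$, and then to identify that condition with bounded Wirtinger angles of $\varphi$.

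Since $k=1$ and $\KK\in\{\C,\HH\}$, the sphere bundle $E^1$ coincides with the principal $G$-bundle for $G=\U(1)$ or $G=\mathrm{Sp}(1)$, with Lie algebra $\mathfrak{g}\cong\mathrm{Im}(\KK)$; the connection metric reduces to a bi-invariant metric on the fibers. By Weinstein's original formulation of fatness (as summarized in~\cite{Ziller}), $\nabla$ is fat if and only if, for every nonzero $u\in\mathfrak{g}$, the real $2$-form $\langle R^\nabla,u\rangle$ on $B$ is nondegenerate; equivalently, for every nonzero $X\in T_pB$ and every nonzero $u\in\mathfrak{g}$, there exists $Y\in T_pB$ with $\langle R^\nabla(X,Y),u\rangle\neq 0$. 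This already forces $\varphi$ to be an immersion, since $\varphi_*X=0$ would give $R^\nabla(X,\cdot)\equiv 0$.

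I would then compute the universal curvature. At $\sigma\in\KP^{N-1}$ represented by a unit vector $v\in\KK^N$, identify $T_\sigma\KP^{N-1}$ with $v^\perp\subset\KK^N$, and extend $v$ to a local unit section $\tilde v$ chosen so that $\langle X\tilde v,\tilde v\rangle=0$ at $\sigma$ (a synchronous frame). By the definition of $\nabla$, the connection form is $\alpha(X)=\langle X\tilde v,\tilde v\rangle\in\mathrm{Im}(\KK)$; with $\alpha$ vanishing at $\sigma$, a short direct computation yields $R^{\mathrm{univ}}(X,Y)=d\alpha(X,Y)=-2\,\mathrm{Im}\langle X,Y\rangle$ at $\sigma$. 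Pulling back by $\varphi$ gives $R^\nabla(X,Y)=-2\,\mathrm{Im}\langle\varphi_*X,\varphi_*Y\rangle\in\mathrm{Im}(\KK)$ for $X,Y\in T_pB$.

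Finally, I would translate. Under the standard identification of $\mathrm{span}_\R\mathcal{J}$ with $\mathrm{Im}(\KK)$ induced by the almost complex/quaternionic structure at $\varphi(p)$, the elementary identity $\mathrm{Re}\langle\mathfrak{J}W,Z\rangle=\bigl\langle\mathfrak{J},\,\mathrm{Im}\langle W,Z\rangle\bigr\rangle$ (for $W,Z\in\KK^N$ and $\mathfrak{J}\in\mathrm{Im}(\KK)$) converts the Weinstein condition into: for every nonzero $X\in T_pB$ and every nonzero $\mathfrak{J}\in\mathrm{span}_\R\mathcal{J}$, some $Y\in T_pB$ satisfies $\mathrm{Re}\langle\mathfrak{J}\varphi_*X,\varphi_*Y\rangle\neq 0$. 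Equivalently, no vector in $\mathrm{span}_\R\{\mathfrak{J}\varphi_*X\mid\mathfrak{J}\in\mathcal{J}\}$ is orthogonal (in the real inner product) to $\varphi_*(T_pB)\subset T_{\varphi(p)}\KP^{N-1}$, which is exactly $\theta(X)<\pi/2$. Together with the immersion requirement this gives both directions of the theorem. The main technical care is in the quaternionic case: one must fix a side-convention for scalar multiplication in $\HH^N$, check that the non-abelian term in the Maurer--Cartan-type curvature expression is killed by the synchronous-frame choice, and match the resulting $\mathrm{Im}(\HH)$-valued curvature with the locally defined almost quaternionic structure $\{I,J,K\}$ on $\HP^{N-1}$.
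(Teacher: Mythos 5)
Your proof is correct, but it takes a genuinely different route from the paper's. The paper derives fatness as a corollary of the Lie-theoretic machinery of Section~\ref{S:calc}: it expresses the universal connection on $G/H\to G/K$ in terms of the $\mg=\mh\oplus\mm\oplus\mp$ decomposition, obtains the general formula $|R^\nabla(W,V)X|=\tfrac12|[\tilde X,\alpha]^{\mathfrak T}|_0$ (Equation~\ref{m1}, valid for all $k$ and $\KK$), and then specializes to $k=1$ by identifying $\mathrm{ad}_\alpha$ with the almost complex/quaternionic structure. You instead work directly on $\KP^{N-1}$ with a synchronous unit section $\tilde v$, compute the connection form $\alpha(X)=\langle D_X\tilde v,\tilde v\rangle$, and evaluate the curvature to get $R^{\mathrm{univ}}(X,Y)=-2\,\mathrm{Im}\langle X,Y\rangle$ under the identification $T_\sigma\KP^{N-1}\cong v^\perp$; pulling back and invoking Weinstein's nondegeneracy criterion then yields the Wirtinger-angle condition via the elementary pairing $\mathrm{Re}\langle\mathfrak{J}W,Z\rangle=\pm\langle\mathfrak{J},\mathrm{Im}\langle W,Z\rangle\rangle$. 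Your argument is more elementary and self-contained for this particular theorem, sidestepping the homogeneous-space framework entirely, while the paper's approach buys uniformity: the same $[\tilde X,\alpha]$ computation simultaneously feeds Theorems~\ref{P:par} and~\ref{USA}, whereas you would need a second (and more delicate) frame computation of $D_ZR^\nabla$ to recover those. You correctly flag the quaternionic conventions as the place to be careful; note also that the raw $\mathrm{Im}(\HH)$-valued expression $-2\,\mathrm{Im}\langle\varphi_*X,\varphi_*Y\rangle$ is frame-dependent, which is harmless here because the fatness criterion quantifies over all unit $u\in\mathfrak g$, mirroring the quantifier over all $\mathfrak J\in\mathrm{span}_\R\mathcal J$ in the Wirtinger condition.
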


There are several well-studied conditions that imply bounded Wirtinger angles. For example, an immersion is called \emph{K\"ahler} (when $\KK=\C$) or \emph{quaternionic} (when $\KK=\HH$) if $\theta$ is constant at zero.  More generally, a submanifold of $\CP^N$ or $\HP^N$ is called \emph{slant} if $\theta$ is constant, or \emph{semi-slant} if its tangent bundle decomposes into two sub-bundles on which $\theta$ is constant respectively at zero and at another value.  In Section~\ref{S:lit}, we will survey some of the literature related to these conditions and it's implications to the search for fat connections.

Going the other direction, there are several known obstructions to fat connections.  For example, a trivial bundle can not admit a fat connection~\cite{Weinstein}, which implies that:
\begin{cor} A homotopically trivial immersed submanifold of $\CP^N$ or $\HP^N$ could not have bounded Wirtinger angles.
\end{cor}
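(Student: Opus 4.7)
The plan is a direct contrapositive argument combining Theorem~\ref{P:fat} with Weinstein's obstruction. Suppose, for contradiction, that an immersion $\varphi:B\ra\KK\mathbb{P}^N$ (with $\KK\in\{\C,\HH\}$) is null-homotopic and yet has bounded Wirtinger angles. First I would observe that $\KK\mathbb{P}^N=G_1(\KK^{N+1})$, so $\varphi$ is a legitimate classifying map for a $\KK^1$-line bundle $\pi_E:E\ra B$, and Theorem~\ref{P:fat} (with $N$ replaced by $N+1$) applies verbatim to conclude that the pulled-back connection $\nabla$ on $E$ is fat.

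Next I would use the fact that the bundle isomorphism class is determined by the homotopy class of $\varphi$: since $\varphi$ is null-homotopic, it is homotopic to a constant map, and the pull-back of the universal line bundle by a constant map is trivial. Hence $\pi_E:E\ra B$ is (isomorphic to) a trivial $\KK^1$-bundle that nevertheless carries a fat connection $\nabla$. This contradicts the Weinstein obstruction cited in the paragraph preceding the corollary, namely that a trivial bundle admits no fat connection.

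The only mild subtlety is ensuring the ambient setup of the paper is satisfied: the introduction takes $B$ to be a compact Riemannian manifold, so I would note at the start that an ``immersed submanifold'' here is understood to be the image of an immersion from a compact manifold (which is implicitly needed for the notion of homotopic triviality to interact with Weinstein's obstruction); a Riemannian metric on $B$ can then be chosen arbitrarily, since whether $\nabla$ is fat does not depend on that choice. There is no substantive obstacle in the argument — the content is entirely absorbed into Theorem~\ref{P:fat} and the cited Weinstein result — so the write-up should be only a few sentences long.
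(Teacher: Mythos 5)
Your argument is exactly the one the paper intends: Theorem~\ref{P:fat} says an immersion with bounded Wirtinger angles pulls back to a fat connection, while a null-homotopic $\varphi$ yields a trivial bundle, which cannot carry a fat connection by Weinstein's obstruction. The proposal is correct and takes the same route as the paper.
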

Among $\HH$-bundles over $S^4$, only the Hopf bundle admits a fat connection~\cite{DR}, which implies:
\begin{cor} For any $N>0$, there is at most one element of $\pi_4(\HP^N)$ that admits an immersed representative $\varphi:S^4\ra\HP^N$ with bounded Wirtinger angles.
\end{cor}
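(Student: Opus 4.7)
The plan is to combine Theorem~\ref{P:fat} with the obstruction recorded in~\cite{DR}. I would argue by contradiction: suppose two distinct elements $[\varphi_1],[\varphi_2]\in\pi_4(\HP^N)$ each admit an immersed representative with bounded Wirtinger angles. The goal is to show that these two representatives produce two non-isomorphic $\HH$-bundles over $S^4$, each admitting a fat connection, which would contradict the fact recorded in~\cite{DR} that the Hopf bundle is the only $\HH$-bundle over $S^4$ admitting a fat connection.

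For $i=1,2$, I would pull back the universal bundle along $\varphi_i$ to obtain an $\HH$-bundle $E_i\ra S^4$ equipped with the pulled-back connection $\nabla_i$. Since $k=1$ and $\KK=\HH$, the hypothesis on $\varphi_i$ puts me squarely in the setting of Theorem~\ref{P:fat}, which yields immediately that $\nabla_i$ is fat. By~\cite{DR}, each of $E_1$ and $E_2$ must then be isomorphic to the Hopf bundle.

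The one step that deserves explicit care is the claim that distinct elements of $\pi_4(\HP^N)$ really pull the universal bundle back to non-isomorphic $\HH$-bundles over $S^4$; this is the main (though still routine) bookkeeping. I would identify $\pi_4(\HP^N)\cong\Z$ via the long exact sequence of the fibration $S^3\ra S^{4N+3}\ra\HP^N$ (using that $N\geq 1$ kills the relevant terms), identify the set of isomorphism classes of $\HH$-bundles over $S^4$ with $\Z$ via the Euler number, and check that the pull-back correspondence $[\varphi]\mapsto[\varphi^*(\text{universal})]$ realises the canonical isomorphism between these two copies of $\Z$. Once this injectivity is in hand, the assumption $[\varphi_1]\neq[\varphi_2]$ is incompatible with both $E_1$ and $E_2$ being the Hopf bundle, and the corollary follows.
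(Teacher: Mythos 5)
Your argument is correct and follows essentially the paper's own (implicit, one-line) proof: Theorem~\ref{P:fat} converts ``immersed with bounded Wirtinger angles'' into fatness of the pulled-back connection, and the Derdzinski--Rigas result~\cite{DR} then forces the pulled-back $\HH$-bundle to be the Hopf bundle. The injectivity of the pullback correspondence $\pi_4(\HP^N)\to\{\text{iso.\ classes of }\HH\text{-bundles over }S^4\}$ that you flag and sketch --- via $\pi_4(\HP^N)\cong\Z$ from the Hopf fibration (equivalently, because $\HP^1\hookrightarrow\HP^N$ is a $5$-equivalence) together with the Euler number identification --- is precisely the bookkeeping step the paper leaves to the reader, and your outline of it is sound.
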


In fact, if $N$ is large enough to ensure universality, then there is \emph{exactly} one such element.  Similarly, among circle bundles over the flag $F^6=SU(3)/T^2$, there is an infinite family which admit fat connections (corresponding to the positively curved Aloff Wallach spaces) and there are two which do not (the trivial bundle and the $W_{1,0}$ bundle); see~\cite{Ziller}.  Thus:
\begin{cor} For sufficiently large $N$, there are infinitely many homotopy classes of maps $F^6\ra\CP^N$ which admit an immersed representatives with bounded Wirtinger angles, and two which do not.
\end{cor}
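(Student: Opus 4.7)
The strategy is to apply Theorem \ref{P:fat} together with the Narasimhan-Ramanan universality theorem to translate Ziller's classification \cite{Ziller} of fat circle bundles over $F^6 = SU(3)/T^2$ into the desired statement about homotopy classes of maps $F^6\to\CP^N$.

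First I would fix $N$ large enough that two conditions hold simultaneously: (i) the Narasimhan-Ramanan theorem applies to every complex line bundle over $F^6$, so that every connection on such a bundle is realized as the pullback of the universal connection via some classifying map in the appropriate homotopy class; and (ii) the natural map $[F^6,\CP^N]\to H^2(F^6;\Z)$ is a bijection, identifying homotopy classes with isomorphism classes of complex line bundles over $F^6$. Both conditions hold for all sufficiently large $N$, and since $F^6$ is a fixed finite-dimensional manifold they can be arranged at once. Next I would establish the key equivalence: a homotopy class in $[F^6,\CP^N]$ admits an immersed representative with bounded Wirtinger angles if and only if the associated line bundle admits a fat connection. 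The forward direction is a direct application of Theorem \ref{P:fat}; for the backward direction, Narasimhan-Ramanan realizes any fat connection as a pullback $\varphi^*\nabla^{\text{univ}}$ for some classifying map $\varphi$ in that class, and Theorem \ref{P:fat} then forces $\varphi$ to be an immersion with bounded Wirtinger angles.

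With this bijection in hand, the conclusion is immediate from Ziller's survey \cite{Ziller}. The Aloff-Wallach spaces $SU(3)/\Spq$ with $pq(p+q)\neq 0$ are circle bundles over $F^6$ which each carry a positively curved metric (hence a fat connection), and their total spaces realize infinitely many distinct classes in $H^2(F^6;\Z)$, giving an infinite family of homotopy classes in $[F^6,\CP^N]$ with immersed representatives of bounded Wirtinger angle. Conversely, the trivial circle bundle (using \cite{Weinstein}) and the $W_{1,0}$ bundle are two circle bundles over $F^6$ admitting no fat connection, so the two corresponding homotopy classes in $[F^6,\CP^N]$ admit no immersed representatives of bounded Wirtinger angle. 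The main subtlety is only the bookkeeping of choosing $N$ uniformly large enough for both universality and the stable classification; once this is done, everything else is a mechanical transport across the bijection.
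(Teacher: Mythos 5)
Your proposal is correct and follows essentially the same route as the paper: translate Ziller's classification of circle bundles over $F^6$ admitting fat connections into statements about homotopy classes of classifying maps, using Theorem~\ref{P:fat} for the ``immersion with bounded Wirtinger angles $\Rightarrow$ fat'' direction and Narasimhan--Ramanan universality plus Theorem~\ref{P:fat} for the converse. The bookkeeping you flag (choosing $N$ uniformly large for both universality and the identification $[F^6,\CP^N]\cong H^2(F^6;\Z)$) is exactly what the paper's phrase ``for sufficiently large $N$'' is implicitly handling, so there is no substantive deviation from the paper's argument.
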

We are not aware of any work specifically addressing the possible homotopy classes of submanifolds of $\KP^N$ with bounded Wirtinger angles, but the above discussion suggests that this question is both natural and subtle.

Next, we will study conditions on $\varphi$ under which $\nabla$ is \emph{parallel} or \emph{radially symmetric}:
\begin{definition}  A connection $\nabla$ is called \emph{parallel} if the covariant derivative of its curvature tensor vanishes; that is, $D_ZR^\nabla(X,Y)W=0$ for all $p\in B$, all $X,Y,Z\in T_p B$ and all $W\in E_p=\pi_E^{-1}(p)$.  The connection is called \emph{radially symmetric} if this condition holds when $Z=X$; that is, if $D_XR^\nabla(X,Y)W=0$ for all $p\in B$, all $X,Y\in T_p B$ and all $W\in E_p$.
\end{definition}

Notice that these conditions depend on both the connection and on the choice of metric on the base space.  We therefore assume for the remainder of this section that $\varphi$ is an immersion and that $B$ has the pull-back metric.  This added assumption sacrifices universality, since there is no reason to expect that a given metric on $B$ and a given connection can be simultaneously achieved from a single classifying map $\varphi$.

Strake and Walschap proved that if $B$ has positive sectional curvature, then any radially symmetric connection in any vector bundle over $B$ will induce a connection metric with nonnegative sectional curvature on the vector bundle~\cite{SW}.  However, all known examples of radially symmetric connections are parallel, and parallel connections appear to be rare.  For example, Guijarro, Sadun and Walschap proved in~\cite{parallel} that an $\R^k$ vector bundle over a compact simply connected irreducible symmetric space with a parallel connection must be isomorphic to an \emph{associated bundle}.  Since associated bundles trivially admit submersion metrics of nonnegative curvature, this result seems to limit one's ability to obtain topologically new examples of nonnegatively curved vector bundles using parallel connections (at least over symmetric base spaces).  For $\R^2$-bundles, radially symmetric is equivalent to parallel~\cite{STT}, but for higher rank bundles, the gap between these hypotheses is not well understood.

Let $II$ denote the second fundamental form of $\varphi(B)$ and let $S$ denote the shape operator, so that for $p\in B$, $X,Y\in T_pB\cong\varphi_*(T_p B)$ and $\eta\perp \varphi_*(T_p B)$, we have $\lb S_\eta X,Y\rb = \lb II(X,Y),\eta\rb$.
\begin{theorem}\label{P:par}
If $k=1$ and $\KK\in\{\C,\HH\}$, then $\nabla$ is parallel if and only if
$$\lb S_{(\mathfrak{J}X)^\perp}Y - S_{(\mathfrak{J}Y)^\perp}X,Z\rb = 0$$
for all $p\in B$, $X,Y,Z\in T_p B\cong\varphi_*(T_p B)$ and all $\mathfrak{J}\in\text{span}\{\mathcal{J}\}$, where ``$\perp$'' denotes the component orthogonal to $\varphi_*(T_pB)$.  Furthermore, $\nabla$ is radially symmetric if and only if the above condition is true in the special case $Z=X$.
\end{theorem}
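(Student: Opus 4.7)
The plan is to pull back the explicit curvature formula for the universal connection (which is derived in Section~\ref{S:calc}) and then differentiate it, converting ambient quantities into intrinsic ones by means of the Gauss formula. The starting point is that near $\varphi(p)$, in a local unit section $v$ of $E$,
\begin{equation*}
R^\nabla(X,Y) \;=\; c\sum_{\mathfrak{J}\in\mathcal{J}}\langle\mathfrak{J}\,\varphi_*X,\,\varphi_*Y\rangle\,\mathfrak{J},
\end{equation*}
where each $\mathfrak{J}$ denotes simultaneously an element of the almost complex (or almost quaternionic) structure on $T\KP^{N-1}$ and the corresponding endomorphism of the fiber $E_p\cong\KK$ induced by $v$, and $c\neq0$ is a real constant.

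To compute $D_ZR^\nabla$ at $p$, I would first choose $v$ so that the connection one-form on $E$ vanishes at $\varphi(p)$; this Fermi-type frame trivializes the induced connection on $\text{End}(E)$ at $p$. In the complex case the K\"ahler identity $\tilde\nabla J=0$ then makes every factor in the formula behave parallelly at $p$. In the quaternionic case I would choose $v$ compatibly with a local basis of the almost quaternionic subbundle $Q\subset\text{End}(T\HP^{N-1})$ for which the induced $\mathrm{Sp}(1)$-connection also vanishes at $\varphi(p)$, so that each of $I,J,K$ is individually covariantly constant at $p$ in the chosen framing. Under this setup the tensorial derivative collapses at $p$ to $c$ times the sum over $\mathfrak{J}\in\mathcal{J}$ of
\begin{equation*}
\bigl[Z\langle\mathfrak{J}\varphi_*X,\varphi_*Y\rangle - \langle\mathfrak{J}\varphi_*\nabla_ZX,\varphi_*Y\rangle - \langle\mathfrak{J}\varphi_*X,\varphi_*\nabla_ZY\rangle\bigr]\,\mathfrak{J}.
\end{equation*}

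Since $B$ carries the pull-back metric, the Gauss formula gives $\tilde\nabla_{\varphi_*Z}\varphi_*X = \varphi_*\nabla_ZX + II(Z,X)$; combined with the parallelism of $\mathfrak{J}$ at $p$, this makes the $\varphi_*\nabla_Z$-contributions cancel and leaves $\langle\mathfrak{J}\,II(Z,X),\varphi_*Y\rangle + \langle\mathfrak{J}\varphi_*X,II(Z,Y)\rangle$ in each bracket. Skew-symmetry of $\mathfrak{J}$ together with the fact that $II$ is normal-valued converts these into $-\langle S_{(\mathfrak{J}Y)^\perp}X,Z\rangle + \langle S_{(\mathfrak{J}X)^\perp}Y,Z\rangle$ via the definition of the shape operator and its symmetry. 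The fiber endomorphisms $\{\mathfrak{J}\}_{\mathfrak{J}\in\mathcal{J}}$ are $\R$-linearly independent in $\text{End}(E_p)$, so $D_ZR^\nabla|_p=0$ is equivalent to the bracket vanishing for each $\mathfrak{J}\in\mathcal{J}$ separately, and by $\R$-linearity the condition extends to the entire span. The radially symmetric assertion follows by setting $Z=X$ throughout.

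The main obstacle is the compatibility step in the quaternionic case. Unlike $J$ on $\CP^{N-1}$, the triple $I,J,K$ is parallel only as a rank-three subbundle, so one must arrange simultaneously that the connection one-form on $E$ and the $\mathrm{Sp}(1)$-connection on $Q$ both vanish at $\varphi(p)$ in compatible frames. I expect this to follow because both descend from the same $\mathrm{Sp}(N)\mathrm{Sp}(1)$-principal bundle of $\HP^{N-1}$, so a single first-order gauge change should kill both, but the careful bookkeeping --- and checking that any residual cross-terms of the form $c_{ij}(Z)\,\langle\mathfrak{J}_j\varphi_*X,\varphi_*Y\rangle\,\mathfrak{J}_i$ either vanish or contribute nothing after the $\R$-linear-independence step --- is the most delicate part of the argument.
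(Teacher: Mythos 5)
Your proposal takes a genuinely different route from the paper. The paper never differentiates the almost complex or almost quaternionic structure: it stays at the Lie-theoretic level, derives the identity $\lb (D_Z R^\nabla)(X,Y)W,V\rb = \tfrac12\lb [\tilde{X},\widetilde{II}(\tilde{Z},\tilde{Y})]-[\tilde{Y},\widetilde{II}(\tilde{Z},\tilde{X})],\alpha\rb_0$ for arbitrary $k$ by differentiating along an $h$-horizontal lift in $G$ (where the left-invariant framing does the bookkeeping automatically), and only afterward interprets $\text{ad}_\alpha$ as $\mathfrak{J}$ in the $k=1$ case. You instead start from the restricted curvature formula $R^\nabla(X,Y)=c\sum_{\mathfrak J}\lb\mathfrak J\varphi_*X,\varphi_*Y\rb\,\mathfrak J$ and differentiate it directly, converting via Gauss and the shape operator. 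Your final algebra (skew-symmetry of $\mathfrak J$, normality of $II$, symmetry of $S$) is correct and lands on the right expression, and the $\R$-linear-independence step correctly extends the conclusion from $\mathcal J$ to $\text{span}\{\mathcal J\}$. The trade-off: the paper's route is uniform in $k$ and $\KK$ and never confronts the non-parallelism of $I,J,K$; your route is more concrete but is forced to confront it.

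That confrontation is where the gap is, and you flag it yourself but do not close it. In the quaternionic case the residual term $c\sum_i\lb(\tilde\nabla_Z\mathfrak J_i)\varphi_*X,\varphi_*Y\rb\,\mathfrak J_i^{\mathrm{fiber}}$ survives unless both the connection $1$-form on $E$ and the $\mathrm{Sp}(1)$-connection on $Q\subset\mathrm{End}(T\HP^{N-1})$ are made to vanish at $\varphi(p)$ \emph{in linked frames} --- and the two frames are not independent, since the pairing $\mathfrak J\leftrightarrow\mathfrak J^{\mathrm{fiber}}$ in your curvature formula is itself part of the bundle structure, so you cannot gauge one side without moving the other. Your statement that ``a single first-order gauge change should kill both'' is the correct intuition: both $E$ and $Q$ are associated to the same principal $K$-bundle $G\to G/K$ (via representations of $K$ that factor through $\mathrm{Sp}(1)\cong K/H$), the universal connection on $E$ and the Levi-Civita connection on $Q$ both descend from the same normal-homogeneous principal connection, and a horizontal lift of the base curve simultaneously produces a $\nabla$-parallel frame of $E$ and a $\tilde\nabla$-parallel basis of $Q$ at $p$, automatically compatible. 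But as written, this is an expectation rather than an argument, and it is precisely the step that carries the content of the theorem in the $\HH$ case; without it the proof is incomplete. To finish along your lines you should either supply this associated-bundle verification or observe that your residual term $\sum_{i,j}a_{ij}(Z)\lb\mathfrak J_j\varphi_*X,\varphi_*Y\rb\mathfrak J_i^{\mathrm{fiber}}$ has coefficients $a_{ij}(Z)$ given by the principal connection form, which a horizontal-lift frame kills --- which is essentially the same verification.
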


In particular, if $\varphi(B)$ is totally geodesic, then $\nabla$ is parallel.  But in this case, $\varphi(B)$ is a symmetric space and the bundle is an associated bundle, as we will explain in Section~\ref{S:tg}.

There are examples in which $\nabla$ is parallel even though $\varphi(B)$ is not totally geodesic.  Specifically, if $k=1$ and $\varphi(B)$ is K\"ahler/quaternionic, then $\nabla$ is fat (as explained above) and also parallel (because $(\mathfrak{J}X)^\perp = (\mathfrak{J}Y)^\perp =0$).  The case $\KK=\HH$ is less interesting here because quaternionic implies totally geodesic.  But in the case $\KK=\C$, there are many examples of K\"ahler submanifolds of $\CP^{N-1}$ which are not totally geodesic.

The final property of the connection $\nabla$ which we wish to interpret in terms of the geometry of the classifying map $\varphi$ is the following inequality:
\begin{gather}
\text{For all }p\in B,\text{ all } X,Y\in T_pB, \text{ and all } W,V\in E_p, \notag\\
\lb (D_XR^\nabla)(X,Y)W,V\rb^2 \leq k_B(X,Y)\cdot |R^\nabla(W,V)X|^2,\label{ineq}
\end{gather}
where $k_B$ denotes the unnormalized sectional curvature of $B$.  This inequality was proven in~\cite{SW} to be a necessary condition for $\nabla$ (together with the given metric on $B$) to induce a connection metric with nonnegative sectional curvature on $E$.  Further, if this inequality is \emph{strictly} satisfied (for all orthonormal choices of $X,Y,V,W$), then it was proven in~\cite{T} that $\nabla$ induces a connection metric of nonnegative curvature in $E$ and of positive curvature in $E^1$.

This inequality relates the two previously-discussed properties of a connection: its left side vanishes if and only if $\nabla$ is radially symmetric, while on its right side, $\nabla$ is fat if and only if the expression $|R^\nabla(W,V)X|^2$ is strictly positive for orthonormal $X,W,V$~\cite[Equation 11]{SW}.  Thus, a fat radially symmetric connection over a positively curved base space will satisfy the inequality strictly, and will therefore induce a connection metric of nonnegative curvature on $E$ and of positive curvature on $E^1$.

Our translation of Inequality~\ref{ineq} becomes particularly simple for $\C^1$ and $\HH^1$-bundles:
\begin{theorem}\label{USA}If $k=1$ and $\KK\in\{\C,\HH\}$, then Inequality~\ref{ineq} is satisfied if and only if the following inequality is satisfied for all $p\in B$, $X,Y\in T_p B\cong\varphi_*(T_p B)$ and all $\mathfrak{J}\in\text{span}\{\mathcal{J}\}$:
\begin{equation}\label{loki}\lb S_{(\mathfrak{J}X)^\perp}Y - S_{(\mathfrak{J}Y)^\perp}X,X\rb^2\leq k_B(X,Y)\cdot|\text{proj}(\mathfrak{J}X)|^2,
\end{equation}
where ``proj'' denotes the orthogonal projection onto $\varphi_*(T_pB)$.  Further, Inequality~\ref{ineq} is \emph{strictly} satisfied (for orthonormal $X,Y,W,V$) if and only if Inequality~\ref{loki} is \emph{strictly} satisfied (for orthonormal $X,Y$ and unit-length $\mathfrak{J}$).
\end{theorem}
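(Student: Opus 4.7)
The plan is to translate both sides of Inequality~(\ref{ineq}) into extrinsic geometry using the curvature formula for the pulled-back universal connection that presumably underlies the proof of Theorem~\ref{P:fat} in Section~\ref{S:calc}. In the rank-one $\KK\in\{\C,\HH\}$ setting, every skew-Hermitian endomorphism of the fiber $E_p$ corresponds (under the canonical identification $\text{End}_\KK^{\text{skew}}(\KK)\cong\text{Im}(\KK)$) to a unique element of $\text{span}\{\mathcal{J}\}$, so that
\[
\langle R^\nabla(X,Y)W,V\rangle=\langle \mathfrak{J}_{W,V}\,\varphi_*X,\,\varphi_*Y\rangle
\]
for a bilinear assignment $(W,V)\mapsto\mathfrak{J}_{W,V}\in\text{span}\{\mathcal{J}\}$ whose image covers all of $\text{span}\{\mathcal{J}\}$; moreover $W,V$ being orthonormal corresponds exactly to $\mathfrak{J}_{W,V}$ having unit length.

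With this formula, the right-hand side of~(\ref{ineq}) is immediate: using the natural identification $\langle R^\nabla(W,V)X,Y\rangle=\langle R^\nabla(X,Y)W,V\rangle$ and summing $Y$ over an orthonormal basis of $T_pB$ gives $|R^\nabla(W,V)X|^2=|\text{proj}(\mathfrak{J}_{W,V}\,\varphi_*X)|^2$. For the left-hand side, extend $X,Y,W,V$ as parallel sections at $p$ in $B$ and $E$, so $\langle(D_XR^\nabla)(X,Y)W,V\rangle=X\langle\mathfrak{J}_{W,V}\,\varphi_*X,\,\varphi_*Y\rangle$ at $p$. Parallel-extend $\mathfrak{J}_{W,V}$ locally as a section of the $\text{span}\{\mathcal{J}\}$-bundle in $\KP^{N-1}$, which is possible because this 3-plane (or 1-plane) sub-bundle of $\text{End}(T\KP^{N-1})$ is preserved by the Levi-Civita connection $\bar\nabla$. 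Applying the Gauss formula twice and using the skew-symmetry of $\mathfrak{J}$ acting on $T\KP^{N-1}$ together with the symmetry of $S$ yields
\[
X\langle\mathfrak{J}\,\varphi_*X,\varphi_*Y\rangle\big|_p=\langle S_{(\mathfrak{J}X)^\perp}Y-S_{(\mathfrak{J}Y)^\perp}X,\,X\rangle,
\]
which the reader will recognize from the proof of Theorem~\ref{P:par}.

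Combining the two calculations, Inequality~(\ref{ineq}) for a particular $(W,V)$ becomes exactly (\ref{loki}) with $\mathfrak{J}=\mathfrak{J}_{W,V}$. Since both sides of (\ref{loki}) are quadratic in $\mathfrak{J}$, the family of inequalities indexed by $(W,V)\in E_p\times E_p$ is equivalent to the family indexed by $\mathfrak{J}\in\text{span}\{\mathcal{J}\}$. Strictness transfers because the orthonormal case on the $(W,V)$-side matches the unit-length case on the $\mathfrak{J}$-side under the parametrization above.

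The main obstacle will be the $\HH$-case of the Gauss-formula computation: because the almost-quaternionic structure $\{I,J,K\}$ is not parallel (only the 3-plane bundle they span is), one must carefully argue that choosing $\mathfrak{J}$ to be parallel at $p$ as a section of this sub-bundle suffices to kill the $\bar\nabla\mathfrak{J}$ term, even though no choice can make $\mathfrak{J}$ pointwise a genuine almost complex structure throughout a neighborhood. A secondary check is to pin down the precise norm and surjectivity properties of $(W,V)\mapsto\mathfrak{J}_{W,V}$ so that the correspondence between orthonormal $(W,V)$ and unit $\mathfrak{J}$ is genuinely bijective on the relevant level sets; this is the ingredient that makes the strict version of the equivalence tight rather than merely one-sided.
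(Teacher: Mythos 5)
Your proposal takes a genuinely different route from the paper, which in the end simply combines the two displayed identities from the proofs of Theorems~\ref{P:fat} and~\ref{P:par}. Those identities are in turn derived in Section~\ref{S:calc} by a purely Lie-theoretic computation in the chain $G\ra G/H\ra G/K$ (O'Neill tensors, left-invariant frames, and the decomposition $\mg=\mh\oplus\mm\oplus\mp$), whereas you propose to re-derive them extrinsically via the Gauss formula for $\varphi(B)\subset\KP^{N-1}$. The extrinsic route is shorter and more geometric, and your Gauss-formula computation $X\lb\mathfrak{J}\varphi_*X,\varphi_*Y\rb = \lb S_{(\mathfrak{J}X)^\perp}Y - S_{(\mathfrak{J}Y)^\perp}X,X\rb$ is correct as stated (with $\mathfrak{J}$ $\bar\nabla$-parallel along the geodesic and $X,Y$ $\nabla^B$-parallel). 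The surjectivity/normalization issue you flag for the strict case is real but routine.

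However, you have misidentified the main obstacle, and the one you glossed over is a genuine gap. Your stated worry about the $\HH$-case is a non-issue: parallel transport of an initial element $\mathfrak{J}_0$ along a single curve inside a $\bar\nabla$-parallel sub-bundle of $\text{End}(T\KP^{N-1})$ always exists, so one can always choose $\bar\nabla_X\mathfrak{J}|_p=0$; there is no need for $\mathfrak{J}$ to be a pointwise almost complex structure on a neighborhood. The gap is one step earlier. You write $\lb(D_XR^\nabla)(X,Y)W,V\rb = X\lb\mathfrak{J}_{W,V}\varphi_*X,\varphi_*Y\rb$, where $W(t),V(t)$ are $\nabla$-parallel, and then substitute for $\mathfrak{J}_{W(t),V(t)}$ the $\bar\nabla$-parallel extension of $\mathfrak{J}_{W,V}$ into the Gauss-formula computation. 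This substitution is only valid if $\nabla$-parallel transport of $(W,V)$ induces $\bar\nabla$-parallel transport of $\mathfrak{J}_{W,V}$ along $\varphi\circ c$, i.e., if the bundle map $(W,V)\mapsto\mathfrak{J}_{W,V}$ from (the appropriate tensor square of) $E$ into $\varphi^*\text{span}\{\mathcal{J}\}$ intertwines $\nabla$ with $\varphi^*\bar\nabla$. This is true — both bundles are associated to $\varphi^*(G\ra G/K)$ with its canonical principal connection, and the relevant map $\KK\times\KK\ra\mm$ is $K$-equivariant — but it is precisely the fact that the paper's Lie-theoretic computation establishes when it differentiates $\lb\Omega_\Hor(\cdot,\cdot),\alpha\rb_0$ against a \emph{fixed} $\alpha\in\mm$. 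Without supplying this compatibility, your $\frac{d}{dt}\mathfrak{J}_{W(t),V(t)}$ term could contribute, and the identity you invoke from Theorem~\ref{P:par} would not be recovered.
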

Inequality~\ref{loki} is clearly satisfied if the following quantities are \emph{both} sufficiently close to zero for all $p\in B$:
\begin{itemize}
\item $|S(p)|=\max\{|S_\eta X|\mid X\in\varphi_*(T_pB), \eta\perp\varphi_*(T_pB), |X|=|\eta|=1\},$
\item $\theta(p)=\max\{\theta(X)\mid X\in \varphi_*(T_pB)\}.$
\end{itemize}
In fact, the closer one of these quantities is to zero, the further the other one can move away from zero while still satisfying the inequality:
\begin{cor}\label{bable}Assume that the metric on $\KP^{N-1}$ is normalized to have maximal sectional curvature $1$.  If $|S(p)|^2<\frac{1}{16\tan^2\theta(p) + 8}$ for all $p\in B$, then Inequality~\ref{loki} is strictly satisfied, so $\nabla$ induces a connection metric of nonnegative curvature on $E$ and of positive curvature on $E^1$.
\end{cor}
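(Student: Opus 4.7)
The plan is to reduce everything to verifying that Inequality~\ref{loki} holds \emph{strictly} for every $p\in B$, every orthonormal pair $X,Y\in T_pB$, and every unit-length $\mathfrak{J}\in\text{span}\{\mathcal{J}\}$. Once that is in hand, the conclusion follows immediately from Theorem~\ref{USA} together with the sufficient condition for nonnegative/positive curvature from~\cite{T} that was recalled just before Theorem~\ref{USA}.

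The core of the argument will be to bound both sides of~(\ref{loki}) by explicit expressions in $|S(p)|$ and $\theta(p)$. For the left side, I will first observe that each unit-length $\mathfrak{J}\in\text{span}\{\mathcal{J}\}$ squares to $-\Id$ and is therefore an isometry, so $|\mathfrak{J}X|=|\mathfrak{J}Y|=1$. Since $\mathfrak{J}X$ lies in $\text{span}_\R\{\mathfrak{J}'X\mid\mathfrak{J}'\in\mathcal{J}\}$, the angle between $\mathfrak{J}X$ and $\varphi_*(T_pB)$ is at most $\theta(X)\leq\theta(p)$, and similarly for $\mathfrak{J}Y$; this yields
\[
|(\mathfrak{J}X)^\perp|,\,|(\mathfrak{J}Y)^\perp|\leq \sin\theta(p),\qquad |\text{proj}(\mathfrak{J}X)|^2\geq\cos^2\theta(p).
\]
Combining the bilinear estimate $|\lb S_\eta Z,W\rb|\leq |S(p)|\cdot|\eta|$ (valid for unit vectors $Z,W$ by definition of $|S(p)|$) with the triangle inequality then produces the estimate $(\text{LHS of Ineq.~}\ref{loki})\leq 4|S(p)|^2\sin^2\theta(p)$.

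For the right side, I will invoke the Gauss equation for the isometric immersion $\varphi$, together with the standard fact that the given normalization forces the sectional curvatures of $\KP^{N-1}$ to lie in $[\tfrac14,1]$, to obtain $k_B(X,Y)\geq \tfrac14-2|S(p)|^2$; the two error terms are controlled by the bound $|II(\cdot,\cdot)|\leq |S(p)|$ on unit arguments. This yields $(\text{RHS of Ineq.~}\ref{loki})\geq(\tfrac14-2|S(p)|^2)\cos^2\theta(p)$, which is positive because the hypothesis (at $\theta(p)=0$) forces $|S(p)|^2<\tfrac18$ and also forces $\theta(p)<\pi/2$. Finally, dividing the desired strict inequality $4|S(p)|^2\sin^2\theta(p)<(\tfrac14-2|S(p)|^2)\cos^2\theta(p)$ by $\cos^2\theta(p)$ and rearranging produces exactly $|S(p)|^2(16\tan^2\theta(p)+8)<1$, which is the stated hypothesis. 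The main conceptual point to verify carefully is that $\theta(p)$ really does simultaneously control the orthogonal components of $\mathfrak{J}X$ \emph{and} of $\mathfrak{J}Y$ for the same $\mathfrak{J}$; once that is settled using the definition of the Wirtinger angle, everything else reduces to the routine estimates sketched above.
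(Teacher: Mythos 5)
Your proposal is correct and follows essentially the same route as the paper: the same three pointwise bounds (left side $\leq 4|S(p)|^2\sin^2\theta(p)$ via the triangle inequality and the Wirtinger-angle control of $(\mathfrak{J}X)^\perp$ and $(\mathfrak{J}Y)^\perp$; $k_B\geq \tfrac14-2|S(p)|^2$ via the Gauss equation and the $[\tfrac14,1]$ pinching of $\KP^{N-1}$; $|\text{proj}(\mathfrak{J}X)|^2\geq\cos^2\theta(p)$), followed by the identical algebraic rearrangement to $|S(p)|^2(16\tan^2\theta(p)+8)<1$. The point you flag for care (that a single unit $\mathfrak{J}\in\text{span}\{\mathcal{J}\}$ is an isometry and hence $\theta(p)$ bounds the normal components of both $\mathfrak{J}X$ and $\mathfrak{J}Y$) is indeed the only subtlety, and it resolves exactly as you indicate from Definition~\ref{DD}.
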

For example, if the immersion is K\"ahler/quaternionic ($\theta=0$), then the inequality becomes $|S(p)|^2< 1/8$, which is the bound that insures that $B$ has positive curvature.  But as $\theta(p)\ra\pi/2$, the required upper bound on $|S(p)|^2$ goes to $0$.

The author is pleased to thank Luis Guijarro and Wolfgang Ziller for helpful comments and suggestions on this work.
\section{Calculations for the pull-back connection}\label{S:calc}
In this section, we assume that $\KK=\R$ and that $\varphi:B\ra\GR$ is an \emph{isometric immersion}.  Our goal is to describe $R^\nabla$ and $DR^\nabla$ in terms of the classifying map $\varphi$.

We require some notation.  Let $H\subset K\subset G$ equal the triple $O(N-k)\subset O(k)\times O(N-k)\subset O(N)$.  Let $\mh\subset\mk\subset\mg$ denote their Lie algebras.  Let $g_0$ be the biinvariant metric on $G=O(N)$ defined as $g_0(X,Y) = \frac 12\text{trace}(A\cdot B^T)$ for all $A,B\in\mg$.  We will sometime write $\lb A,B\rb_0$ to mean $g_0(A,B)$ and write $|A|^2_0$ to mean $g_0(A,A)$.  We endow $\GR=G/K$ with the normal homogeneous metric induced by $g_0$.  Denote $\mm=\mk\ominus\mh$ and $\mp=\mg\ominus\mk$, where ``$\ominus$'' denotes the $g_0$-orthogonal complement, so $\mg=\mh\oplus\mm\oplus\mp$.  Let $h:G\ra G/K=\GR$ denote the projection, which is a Riemannian submersion with respect to the above-mentioned metrics.

Let $p\in B$, and choose $g\in G$ such that $h(g)=\varphi(p)$.  Let $\mathfrak{T}\subset\mp$ denote the subspace such that $h_*(dL_g(\mathfrak{T}))=\varphi_*(T_pB)$.  For any $Z\in T_pB$, let $\tilde{Z}\in\mathfrak{T}$ denote the unique vector such that $h_*(dL_g(\tilde{Z})) = \varphi_* Z$.  Let $\widetilde{II}:\mathfrak{T}\times\mathfrak{T}\ra\mp\ominus\mathfrak{T}$ denote the lift of the second fundamental form, $II$, of $\varphi(B)$. In other words, $h_*(dL_g(\widetilde{II}(\tilde{X},\tilde{Y}))) = II(\varphi_*X,\varphi_*Y)$ for all $X,Y\in T_p B$.
\begin{prop}\hspace{.5in}\label{all}
\begin{enumerate}
\item $\nabla$ is fat if and only if $\left[\tilde{X},\alpha\right]^{\mathfrak{T}}\neq 0,$ and
\item $\nabla$ is parallel if and only if
$\left\langle \left[\tilde{X},\widetilde{II}(\tilde{Z},\tilde{Y})\right]
-\left[\tilde{Y},\widetilde{II}(\tilde{Z},\tilde{X})\right], \alpha\right\rangle_0=0, and$
\item Inequality~\ref{ineq} is satisfied if and only if the following is satisfied:
\begin{equation}
\left\langle\left[\tilde{X},\widetilde{II}(\tilde{X},\tilde{Y})\right] -
  \left[\tilde{Y},\widetilde{II}(\tilde{X},\tilde{X})\right],\alpha\right\rangle^2
\leq k_B(X,Y)\cdot \left|\left[\tilde{X},\alpha\right]^{\mathfrak{T}}\right|_0^2, \label{INEQ}
\end{equation}
\end{enumerate}
for all $p\in B$, all $X,Y,Z\in T_p B$ and all nonzero decomposable $\alpha\in\mm\cong so(k)\cong\Lambda^2(\R^k)$.
\end{prop}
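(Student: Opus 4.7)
The plan is to reduce all three parts to a single algebraic identity for the curvature of the universal connection. Because the Narasimhan--Ramanan connection on $E_{univ}$ is the one induced from the canonical $G$-invariant connection on the principal bundle $G\to G/K$, and because $G/K$ is a symmetric space (so $[\mp,\mp]\subset\mk$), standard reductive-homogeneous-space theory yields
$$R^\nabla(X,Y)=-[\tilde X,\tilde Y]_\mm\in\mm\cong\mathfrak{so}(E_p),$$
where the subscript denotes $g_0$-orthogonal projection onto $\mm$. Combined with the bi-invariance of $g_0$, this yields the key identity
$$\langle R^\nabla(X,Y),\alpha\rangle_0=\langle\tilde Y,[\tilde X,\alpha]\rangle_0\qquad(\alpha\in\mm),$$
which drives everything that follows.

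For part~(1), I would apply the characterization from~\cite[Eq.~11]{SW}: $\nabla$ is fat if and only if, for every nonzero $X\in T_pB$ and every nonzero decomposable $\alpha\in\mm$, there exists $Y\in T_pB$ with $\langle R^\nabla(X,Y),\alpha\rangle_0\neq 0$. By the key identity, such a $Y$ exists precisely when $[\tilde X,\alpha]$ has nonzero component along some $\tilde Y\in\mathfrak{T}$, i.e.\ when $[\tilde X,\alpha]^\mathfrak{T}\neq 0$. For part~(2), the point is that $R^{\nabla_{univ}}$ is $G$-invariant and hence parallel on the symmetric space $\GR$ with respect to the combined Levi-Civita and universal connection. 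Inserting the Gauss formula $\nabla^{\GR}_{\varphi_*Z}(\varphi_*X)=\varphi_*\nabla^B_Z X+II(Z,X)$ into the pulled-back covariant derivative of $R^\nabla$ yields
$$(D_Z R^\nabla)(X,Y)=R^{\nabla_{univ}}(II(Z,X),\varphi_* Y)+R^{\nabla_{univ}}(\varphi_* X,II(Z,Y)).$$
Applying the algebraic formula to each summand (using $\widetilde{II(Z,X)}=\widetilde{II}(\tilde Z,\tilde X)\in\mp\ominus\mathfrak{T}$) and pairing with $\alpha$ produces exactly the bracket expression in part~(2).

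Part~(3) is then assembly. Specialize part~(2) to $Z=X$; taking $\alpha=W\wedge V$ makes the squared LHS of Inequality~\ref{ineq} equal to the squared LHS of~\ref{INEQ}. For the RHS, expand $|R^\nabla(W,V)X|^2=\sum_i\langle R^\nabla(X,Y_i),\alpha\rangle^2$ over an orthonormal basis $\{Y_i\}$ of $T_pB$ and use the key identity to rewrite this sum as $|[\tilde X,\alpha]^\mathfrak{T}|_0^2$. Since every nonzero decomposable $\alpha\in\mm\cong\Lambda^2(\R^k)$ arises (up to scale) from a pair of linearly independent $W,V\in E_p$, the two inequalities are equivalent, with strictness preserved.

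The main obstacle is pinning down the sign and normalization in the bracket formula $R^\nabla(X,Y)=-[\tilde X,\tilde Y]_\mm$, given the chain of identifications $\mm\cong\mathfrak{so}(k)\cong\mathfrak{so}(E_p)$ and the horizontal-lift convention $\tilde Z=(dL_g|_\mathfrak{T})^{-1}h_*^{-1}\varphi_*Z$; the factor of $1/2$ in $g_0$ enters here. A secondary concern is justifying parallelism of $R^{\nabla_{univ}}$ on $\GR$, which follows either from $G$-invariance or, more elementarily, from the fact that on a symmetric space the canonical connection on any $G$-equivariant bundle coincides with the one induced by Levi-Civita and preserves all $G$-invariant tensors.
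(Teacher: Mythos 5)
Your proposal is correct and, for the key technical step (part~(2)), takes a genuinely different route from the paper. The paper proves the formula
$\lb (D_Z R^\nabla)(X,Y)W,V\rb = \tfrac 12\left\langle \left[\tilde{X},\widetilde{II}(\tilde{Z},\tilde{Y})\right]
-\left[\tilde{Y},\widetilde{II}(\tilde{Z},\tilde{X})\right], \alpha\right\rangle_0$
by lifting everything to $G$, differentiating $dL_{g(t)}^{-1}\tilde Y(t)$ along a horizontal geodesic via an explicit orthonormal-basis expansion, decomposing $\tilde Y'(0)$ into $\widetilde{II}(\tilde Z,\tilde Y)+A_h(\tilde Z,\tilde Y)$, and then observing at the last step that the unwanted terms $A_h(\tilde Z,\tilde Y)-\tfrac 12[\tilde Z,\tilde Y]$ lie in $\mk$, so their bracket with $\tilde X\in\mp$ has no $\mm$-component (this is where the symmetric structure $[\mp,\mk]\subset\mp$ enters). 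You instead invoke the symmetric structure one level higher: since both Levi-Civita on $T\GR$ and $\Hor$ on $E^{univ}$ are the canonical connections attached to the reductive decomposition, the $G$-invariant tensor $R^{\nabla_{univ}}$ is parallel, and the Gauss formula then immediately yields $(D_Z R^\nabla)(X,Y)=R^{\nabla_{univ}}(II(Z,X),\varphi_*Y)+R^{\nabla_{univ}}(\varphi_*X,II(Z,Y))$, from which the bracket identity follows by your key pairing formula. This is more conceptual and avoids the basis computation, at the cost of leaving the sign and the factor of $\tfrac 12$ (which you correctly flag as the remaining bookkeeping) to be pinned down separately; they cancel in part~(3) anyway since both sides of Inequality~\ref{ineq} pick up the same $\tfrac 14$. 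Parts~(1) and~(3) in your write-up then proceed essentially as in the paper: part~(1) via~\cite[Eq.~11]{SW} plus the identity $\langle R^\nabla(X,Y),\alpha\rangle_0=\langle\tilde Y,[\tilde X,\alpha]\rangle_0$, and part~(3) by specializing $Z=X$, summing over an orthonormal basis $\{Y_i\}$ to rewrite $|R^\nabla(W,V)X|^2$ as $\tfrac 14|[\tilde X,\alpha]^\mathfrak{T}|_0^2$, and noting that nonzero decomposable $\alpha\in\Lambda^2(\R^k)$ are exactly those representing $2$-planes in $E_p$.
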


Notice that the validity of these conditions do not depend on the choice of $g\in h^{-1}(\varphi(p))$, even though some of the individual terms do.

As we will explain later in this section, $\alpha$ represents the plane spanned by $W$ and $V$, and Inequalities~\ref{ineq} and~\ref{INEQ} match each other term-for-term in the obvious manner, from which parts (1) and (2) of the proposition follow.  Thus, all that is really required to prove Proposition~\ref{all} is to describe $R^\nabla$ and $DR^\nabla$ in terms of $\varphi$.  The remainder of this section is devoted to this task.

It is already clear that we intend to work in the setting of principle bundles, rather than vector bundles.  The total space of the universal principle $O(k)$-bundle over $\GR$ is the collection of ``frames'', i.e., ordered sets of $k$ orthonormal vectors in $\R^N$:
$$\Gf = O(N)/O(N-k)$$
Let $\pi:\Gf\ra\GR$ denote the projection map, which sends each frame to its span.

This universal principal bundle, $O(k)\hookrightarrow\Gf\stackrel{\pi}{\ra}\GR$, can be re-described as the following homogenous bundle:
\begin{equation}\label{got} K/H\hookrightarrow G/H\stackrel{\pi}{\ra}G/K.
\end{equation}
Notice that $\pi$ becomes a Riemannian submersion when $G/H$ and $G/K$ are endowed with the normal homogeneous metrics induced by $g_0$.  Let $\V$ and $\Hor$ denote the vertical and horizontal distributions of $\pi$.  We will refer to $\Hor$ as the ``universal connection'' because:

\begin{lem} $\Hor$ is a principle connection in the principle bundle; in fact, it equals the universal connection constructed in~\cite{NR}.
\end{lem}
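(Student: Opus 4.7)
The plan is to verify the two defining properties of a principal connection for $\Hor$ (complementarity with $\V$ and invariance under the structure group) directly from the normal homogeneous description, and then match the resulting horizontal distribution with the natural connection on the universal bundle described in the introduction, which Narasimhan--Ramanan showed to be universal.

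First I would unpack the tangent spaces at the basepoint $eH \in G/H$ using the decomposition $\mg = \mh \oplus \mm \oplus \mp$, where the three summands are mutually $g_0$-orthogonal. Since $\V$ is the vertical distribution of $\pi : G/H \to G/K$ and its fiber at $eH$ is the tangent space of the $K/H$-orbit, we have $\V_{eH} = \mm$, hence $\Hor_{eH} = \mp$ by definition of the normal homogeneous metric. Complementarity $T(G/H) = \Hor \oplus \V$ then holds everywhere, since $G$ acts by isometries of $g_0$ and hence transitively by isometries on $G/H$ carrying the basepoint decomposition to every point.

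Next I would check invariance of $\Hor$ under the right action of $K/H$ on $G/H$, which is the structure-group action in the bundle (\ref{got}) (well-defined because $H$ is normal in $K$). At the basepoint, right multiplication by $k \in K$ acts on $T_{eH}(G/H) \cong \mp \oplus \mm$ by $\Ad(k^{-1})$; biinvariance of $g_0$ together with $\Ad(K)$-invariance of $\mk$ guarantees that $\Ad(k^{-1})$ preserves both $\mp$ and $\mm$ separately. Away from the basepoint, one translates by an element of $G$ and uses that left and right actions commute. Combined with the complementarity, this gives that $\Hor$ is a principal connection.

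The final and most delicate step is identifying $\Hor$ with the connection constructed in \cite{NR}. The natural connection on the universal vector bundle was described in the introduction as the one whose parallel sections $t \mapsto (\sigma(t), V(t))$ satisfy $V'(t) \perp \sigma(t)$; equivalently, the associated principal connection on $\Gf = O(N)/O(N-k)$ has horizontal curves given by frames $(v_1(t), \ldots, v_k(t))$ for which each $v_i'(t)$ is orthogonal to $\text{span}(v_1, \ldots, v_k)$. Writing matrices of $\mg = so(N)$ in $(k) + (N-k)$ block form, the subspace $\mp$ consists exactly of matrices with vanishing $k \times k$ and $(N-k) \times (N-k)$ diagonal blocks, while $\mm$ consists of those with vanishing off-diagonal blocks and zero lower-right block. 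Acting on the standard frame at $eH$, a vector in $\mp$ moves each frame element into a direction orthogonal to the frame's span, whereas a vector in $\mm$ rotates the frame within its span. Hence horizontal curves of the Riemannian submersion through $eH$ agree with parallel frame curves of the natural connection; $G$-equivariance extends this to all points, completing the identification. The main obstacle is really just this last bookkeeping step, since once one writes out the block structure of $\mp$ the equivalence with the Narasimhan--Ramanan description is essentially a one-line computation.
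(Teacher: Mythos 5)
Your proposal is correct, and it shares the paper's overall skeleton (show $\Hor$ is a principal connection, then invoke $G$-homogeneity to reduce the comparison with Narasimhan--Ramanan's connection to a single point), but the first half proceeds by a different mechanism. The paper argues at the level of group actions: since $H$ is normal in $K$ and $K/H \cong O(k)$, the right $O(k)$-action on $G/H$ descends from right translation on $G$ and is therefore isometric for the normal homogeneous metric; being also fiber-preserving, it preserves $\V$ and hence $\Hor = \V^\perp$, with no Lie-algebra computation needed. You instead verify invariance infinitesimally via $\Ad(k^{-1})$ on $\mp \oplus \mm$. That route works, but note one small gap: to see that $\Ad(K)$ preserves $\mm = \mk \ominus \mh$, you need $\Ad(K)$-invariance of $\mh$ as well as of $\mk$; this is exactly the normality of $H$ in $K$, which you cite only to make the right action well-defined, not to justify the $\Ad$-preservation. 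Also, strictly speaking $dR_k$ at $eH$ lands in $T_{kH}(G/H)$, so the map you call ``$\Ad(k^{-1})$ on $T_{eH}$'' is really $dL_{k^{-1}} \circ dR_k$; the conclusion is unaffected once one observes $\Hor_{kH} = dL_k(\mp)$. On the upside, you make explicit the single-point block-matrix verification that $\mp$ generates exactly the Narasimhan--Ramanan horizontal frame motions, a step the paper merely declares ``straightforward,'' so your write-up is more self-contained there.
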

\begin{proof}  Notice that $K=H\times O(k)$, so $H$ is normal in $K$, and $K/H=O(k)$.  Since $H$ commutes with $O(k)$, the right-$O(k)$-action on $G/H$ is well-defined and isometric.  Thus, the base space, $G/K$, of $\pi$ can be identified with $(G/H)/O(k)$.  Under this identification, $\pi$ is simply the quotient map from $G/H$ to $(G/H)/O(k)$.  In summary, the right $O(k)$-action on $G/H$ preserves each $\pi$-fiber (and therefore preserves $\V$) and is isometric (so it also preserves $\Hor$).  Thus, $\Hor$ is invariant under this principal $O(k)$-action, which makes it a principle connection.

Notice that the left $G$-action on itself induces a transitive isometric $G$-action on $G/H$ which sends $\pi$-fibers to $\pi$-fibers and therefore preserves $\Hor$ and $\V$.  Thus, to verify that $\Hor$ is the same as the universal connection constructed in~\cite{NR}, which has this same homogeneity property, it suffices to check a single point, which is straightforward.
\end{proof}

Consider the chain of Riemannian submersions $$G\stackrel{f}{\ra}G/H=\Gf\stackrel{\pi}{\ra}G/K=\GR,$$
and denote $h=\pi\circ f$.  Let $\omega_\Hor$ denote the connection form of $\Hor$, which is an $o(k)$-valued $1$-form on $\Gf$.  Let $\Omega_\Hor$ denote its curvature form, which is an $o(k)$-valued $2$-form on $\Gf$.

Let $g\in G$ be arbitrary, and denote $q=f(g)\in \Gf$.  Recall that $L_g:G\ra G$ induces an isometry $\Gf\ra\Gf$ which preserves $\V$ and $\Hor$.  Therefore,
$$\Hor_q = \{f_*(dL_gX)\mid X\in\mp\}\,\,\,\text{ and }\,\,\,\
  \V_q = \{f_*(dL_gV)\mid V\in\mm\}.$$
Notice that for any $V\in\mm\cong o(k)$, we have
\begin{equation}\label{ironman}\omega_{\Hor}(f_*(dL_gV)) = V,\end{equation}
simply because the action fields for the principle right $O(k)$-action on $\Gf$ are projections via $f_*$ of left-invariant fields on $G$.

\begin{lem}\label{L:OmegaH} For all $X,Y\in\mp$, we have: $\Omega_\Hor(f_*(dL_gX),f_*(dL_gY)) = \frac 12 [X,Y]^\mm$.
\end{lem}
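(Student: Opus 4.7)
The plan is to pull everything back to $G$ via the submersion $f$, apply the Maurer-Cartan equation, and exploit tensoriality. Because $\Omega_\Hor$ is a tensorial $2$-form, its pullback $f^*\Omega_\Hor$ is determined pointwise, and any smooth extensions of $dL_gX$ and $dL_gY$ to vector fields on $G$ will do for the calculation. I would use the left-invariant extensions $\tilde X,\tilde Y$, which are globally defined on $G$ and tractable, even though they fail in general to descend to vector fields on $G/H$.

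The first step is to identify the pulled-back connection form. For any $W\in\mg$, decompose $W=W^\mh+W^\mm+W^\mp$. Then $f_*(dL_gW^\mh)=0$ since that piece is $f$-vertical, $f_*(dL_gW^\mp)$ is $\pi$-horizontal so $\omega_\Hor$ kills it, and $f_*(dL_gW^\mm)$ is a fundamental vector field of the right $O(k)$-action on which $\omega_\Hor$ reads off $W^\mm$ by~(\ref{ironman}). Consequently $(f^*\omega_\Hor)(dL_gW)=W^\mm$; in other words, $f^*\omega_\Hor$ equals the $\mm$-component of the left Maurer-Cartan form $\theta$ characterized by $\theta(dL_gW)=W$.

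Next I would apply the structure equation $\Omega_\Hor=d\omega_\Hor+\tfrac12[\omega_\Hor,\omega_\Hor]$ and pull back to obtain $f^*\Omega_\Hor=d(\theta^\mm)+\tfrac12[\theta^\mm,\theta^\mm]$. The Maurer-Cartan equation $d\theta+\tfrac12[\theta,\theta]=0$ on $G$, projected to $\mm$, yields $d(\theta^\mm)=-\tfrac12[\theta,\theta]^\mm$. Evaluating on $(\tilde X,\tilde Y)$ with $X,Y\in\mp$: the functions $\theta^\mm(\tilde X)\equiv X^\mm=0$ and $\theta^\mm(\tilde Y)\equiv Y^\mm=0$ vanish identically, which kills the $[\theta^\mm,\theta^\mm]$ contribution entirely. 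The remaining term $-\tfrac12[\theta,\theta]^\mm(\tilde X,\tilde Y)$ reduces to a multiple of $[X,Y]^\mm$, producing the stated formula once conventions are fixed.

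The only real obstacle I anticipate is bookkeeping the signs and the factor $\tfrac12$ consistently across the structure equation, the convention for the graded bracket $[\alpha,\beta]$ of $\mg$-valued $1$-forms, and the Maurer-Cartan equation. The conceptual content reduces to two observations: $f^*\omega_\Hor=\theta^\mm$, and $\theta(\tilde X),\theta(\tilde Y)$ have no $\mm$-components since $X,Y\in\mp$. Tensoriality of the curvature form is what rescues the argument from the failure of $\tilde X,\tilde Y$ to descend to $G/H$, so no horizontal lifts or basic vector fields need to be constructed on the base.
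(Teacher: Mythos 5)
Your approach is sound and genuinely different from the paper's. You pull $\omega_\Hor$ back to $G$ and identify it with the $\mm$-component $\theta^\mm$ of the Maurer--Cartan form, then combine the Cartan structure equation with the Maurer--Cartan equation and evaluate on left-invariant fields; the paper instead computes the O'Neill $A$-tensor of the Riemannian submersion $\pi$ by covariantly differentiating a left-invariant extension along a geodesic in $(G,g_0)$ and using $\nabla_X Y=\tfrac12[X,Y]$. Your route is more Lie-theoretic and dispenses with Riemannian-submersion machinery; the identity $f^*\omega_\Hor=\theta^\mm$ is the right engine and holds for precisely the reasons you give, with Equation~\ref{ironman} supplying the $\mm$-component. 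The one thing you flag but do not finish is the constant. With the conventions $d\alpha(U,V)=U\alpha(V)-V\alpha(U)-\alpha([U,V])$ and $[\alpha,\beta](U,V)=[\alpha(U),\beta(V)]-[\alpha(V),\beta(U)]$, your computation gives $f^*\Omega_\Hor(\tilde{X},\tilde{Y})=d\theta^\mm(\tilde{X},\tilde{Y})=-\theta^\mm([\tilde{X},\tilde{Y}])=-[X,Y]^\mm$, which differs from the stated $\tfrac12[X,Y]^\mm$ by a factor of $-2$. That discrepancy traces to the paper's step equating $\Omega_\Hor$ with $\omega_\Hor\circ A$ on horizontal vectors, whereas the standard relation for horizontal $X,Y$ is $\Omega(X,Y)=-\omega([X,Y])=-2\,\omega(A_X Y)$; so the precise constant in the lemma is convention-dependent and worth a second look in both arguments. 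Fortunately, none of the downstream results depend on the sign or the factor, only on $|R^\nabla|$ and on vanishing conditions, so this is bookkeeping rather than substance.
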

\begin{proof}
We first consider the special case $g=e$ (the identity of $G$).  In this case, let $t\mapsto g(t)$ denote the geodesic in $G$ with $g(0)=e$ and $g'(0)=X$.  Let $t\mapsto Y(t)=dL_{g(t)}Y$ denote the left-invariant extension of $Y$ along this geodesic.  Notice that $t\mapsto (f\circ g)(t)$ is a $\pi$-horizontal geodesic in $\Gf$, and that $t\mapsto \hat{Y}(t) = f_*(Y(t))$ is a $\pi$-horizontal extension of $f_*Y$ along this horizontal geodesic.  Therefore the O'Neill tensor, $A$, of $\pi$ at the point $q$ satisfies:
$$A(f_*X,f_*Y) = \left(\hat{Y}'(0)\right)^\V = \left(f_*(Y'(0))\right)^\V = \frac 12 \left(f_*[X,Y]\right)^\V= \frac 12 f_*\left([X,Y]^\mm\right).$$

Now we return to the case where $g\in G$ is arbitrary.  The previous formula implies:
$$A(f_*(dL_gX),f_*(dL_gY)) = \frac 12 f_*\left(dL_g\left([X,Y]^\mm\right)\right)\in\V_{f(g)}.$$
Together with Equation~\ref{ironman}, this gives:
\begin{equation}\label{Omega_q}\Omega_\Hor(f_*(dL_g X),f_*(dL_g Y))
                   = \omega_{\Hor}(A(f_*(dL_gX),f_*(dL_gY)))
                   =\frac 12 [X,Y]^\mm.\end{equation}
\end{proof}

Let $\pi_P:P\ra B$ be the principle $O(k)$-bundle obtained as the pull-back via $\varphi$ of the universal frame bundle.  Explicitly, $P=\{(p,a)\in B\times \Gf\mid \varphi(p)=\pi(a)\}$ with projection $\pi_P(p,a) = p$.  Let $\overline{\varphi}:P\ra\Gf$ denote the corresponding bundle homomorphism, defined as $\overline{\varphi}(p,a)=a$.

Let $\omega$ denote the connection form of the principle connection in $P$ obtained as the pull-back via $\varphi$ of the universal connection in the universal frame bundle.  Let $\Omega$ denote the curvature form of $\omega$.  Thus, $\omega$ is an $o(k)$-valued 1-form on $P$, and $\Omega$ is an $o(k)$ valued $2$-form on $P$.

Let $\pi_E:E\ra B$ denote the vector bundle associated to $P$.  Explicitly, $E=P\times_{O(k)}\R^k$.  Let $p\in B$.  For any $x\in\pi_P^{-1}(p)$ and $U\in\R^k$, we let $x\diamond U\in E_p=\pi_E^{-1}(p)$ denote the image of $(x,U)$ under the projection $P\times\R^k\ra E$.  Let $\nabla$ the connection in $E$ associated to $\omega$, and let $R^\nabla$ denote its curvature tensor.

Let $X,Y\in T_p B$ be orthonormal, and let $W,V\in E_p$ be orthonormal.
For simplicity, we initially assume that $\varphi(p) = h(e)\in \GR$.  Let $a=f(e)\in \Gf$, and let $a'=(a,p)\in P$.  Notice that $\R^k$ can be identified with the fiber $E_p$ via the map which sends $\hat{U}\in\R^k$ to $U=a'\diamond\hat{U}\in E_p$. Let $\hat{W},\hat{V}\in\R^k$ be associated with $W,V\in E_p$ in this way.  There exists a unique $\alpha\in o(k)$ such that $\alpha\cdot \hat{W}=\hat{V}$, $\alpha\cdot\hat{V}=-\hat{W}$, and $\alpha\cdot\hat{U}=0$ for all $\hat{U}\in\R^k$ orthogonal to $\text{span}\{\hat{W},\hat{V}\}$, where the dots denote matrix multiplication.  We claim that:
\begin{equation}\label{notnot}\lb \beta\cdot\hat{W},\hat{V}\rb =  \lb\beta,\alpha\rb_0
\,\,\,\,\,\,\,\,\,\text{ for all }\beta\in o(k).\end{equation}
To see this, just choose an ordered orthonormal basis of $\R^k$ beginning with $\hat{W},\hat{V}$, and then express $\alpha,\beta$ in terms of the corresponding standard basis for $o(k)$.

For any $Z\in T_p B$, we let $\overline{Z}\in T_{a'}P$ denote its $\pi_P$-horizontal lift, and we let $\tilde{Z}\in\mp$ denote the unique vector such that $h_*\tilde{Z} = \varphi_* Z$.  In the following calculation, the maximum is taken over all unit-length $Z\in T_pB$:
\begin{eqnarray*}
|R^\nabla(W,V)X| & = & \max\lb R^\nabla(W,V)X,Z\rb = \max\lb R^\nabla(X,Z)W,V\rb \label{T1}\\
   & = & \max\lb \Omega(\overline{X},\overline{Z})\cdot \hat{W},\hat{V}\rb
     =
   \text{max}\lb\Omega(\overline{X},\overline{Z}),\alpha\rb_0 \\
   & = &  \text{max}\lb\Omega_\Hor
(\overline{\varphi}_*\overline{X},\overline{\varphi}_*\overline{Z}),\alpha\rb_0
     =   \frac 12\text{max}\lb[\tilde{X},\tilde{Z}]^\mm,\alpha\rb_0 \\
   & = & \frac 12\text{max}\lb[\tilde{X},\tilde{Z}],\alpha\rb_0
     =   \frac 12\text{max}\lb[\tilde{X},\alpha],\tilde{Z}\rb_0= \frac 12|[\tilde{X},\alpha]^\mathfrak{T}|_0.
\end{eqnarray*}
The last equality uses that $\varphi$ is an isometric immersion, so maximizing over all unit-length $Z\in T_p B$ is the same as maximizing over all unit-length $\tilde{Z}\in\mathfrak{T}$.  Recall that
where $\mathfrak{T}\subset\mp$ is the subspace such that $h_*(\mathfrak{T})=\varphi_*(T_pB)$.  In summary:
\begin{equation}\label{m1}
|R^\nabla(W,V)X| = \frac 12|[\tilde{X},\alpha]^\mathfrak{T}|_0.
\end{equation}

It remains to express the expression $\lb(D_ZR^\nabla)(X,Y)W,V\rb$ in terms of the geometry of $\varphi$. For this, let $t\mapsto c(t)$ denote the geodesic in $B$ with $c(0)=p$ and $c'(0)=Z$.  Let $t\mapsto\overline{c}(t)$ denote its $\pi_P$-horizontal lift beginning at $ a' \in P$.  We can write $\overline{c}(t) = (c(t),\beta(t))$ where $t\mapsto\beta(t)$ is the $\pi$-horizontal lift of $t\mapsto \varphi(c(t))$ to $\Gf$ beginning at $\beta(0)=a$.  Let $t\mapsto g(t)$ be the $h$-horizontal lift to $G$ of $t\mapsto\varphi(c(t))$ beginning at $g(0)=e$.  Define $W(t)=\overline{c}(t)\diamond\hat{W}\in E_{c(t)}$ and $V(t)=\overline{c}(t)\diamond\hat{V}\in E_{c(t)}$. Notice that $V(t)$ and $W(t)$ are parallel because $\overline{c}(t)$ is horizontal.

Let $X(t)$ and $Y(t)$ denote the parallel extensions of $X,Y$ along $t\mapsto c(t)$.  Let $\overline{X}(t)$ and $\overline{Y}(t)$ denote the $\pi_P$-horizontal lifts of these fields along $t\mapsto\overline{c}(t)$.  Let $\tilde{X}(t)$ and $\tilde{Y}(t)$ denote the $h$-horizontal lifts along $t\mapsto g(t)$ of the fields $t\mapsto \varphi_*X(t)$ and $t\mapsto \varphi_*Y(t)$.  Notice that:
$$R^\nabla(X(t),Y(t))W(t) = \overline{c}(t)\diamond\left(\Omega(\overline{X}(t),\overline{Y}(t))\cdot\hat{W}\right)
 \in E_{c(t)}.$$
In the following calculation, we will use $\frac{D}{dt}$ to denote covariant differentiation with respect to $\nabla$, and $\frac{d}{dt}$ to denote the usual differentiation of a path of vectors in the Euclidean spaces $\R^k$ and $o(k)$, and prime $'$ to denote covariant differentiation with respect to the Levi Civita connection in $(G,g_0)$.  With this notation, we have:
\begin{eqnarray*}
\lb (D_Z R^\nabla)(X,Y)W,V\rb
   & = & \left\langle \frac{D}{dt}\Big|_{t=0}
         \overline{c}(t)\diamond\left(\Omega(\overline{X}(t),\overline{Y}(t))
         \cdot\hat{W}\right),V\right\rangle \\
   & = & \left\langle \overline{c}(0)\diamond\left(\frac{d}{dt}\Big|_{t=0}
        \Omega(\overline{X}(t),\overline{Y}(t))\cdot\hat{W}\right),V\right\rangle \\
   & = & \left\langle \left(\frac{d}{dt}\Big|_{t=0}
         \Omega(\overline{X}(t),\overline{Y}(t))\right)\cdot\hat{W},\hat{V}\right\rangle \\
   & = &  \left\langle \frac{d}{dt}\Big|_{t=0}
         \Omega(\overline{X}(t),\overline{Y}(t)),\alpha\right\rangle_0
     =   \left\langle \frac{d}{dt}\Big|_{t=0}
         \Omega_{\Hor}(\overline{\varphi}_*\overline{X}(t),
         \overline{\varphi}_*\overline{Y}(t)),\alpha\right\rangle_0 \\
   & = & \frac 12 \left\langle \frac{d}{dt}\Big|_{t=0}
         \left[dL_{g(t)}^{-1}\tilde{X}(t) ,dL_{g(t)}^{-1}\tilde{Y}(t)\right]^\mm,\alpha\right\rangle_0 \\
   & = & \frac 12 \left\langle \left[\frac{d}{dt}\Big|_{t=0}
         dL_{g(t)}^{-1}\tilde{X}(t),\tilde{Y}\right]^\mm +\left[
         \tilde{X},\frac{d}{dt}\Big|_{t=0}dL_{g(t)}^{-1}\tilde{Y}(t)\right]^\mm,\alpha\right\rangle_0.
\end{eqnarray*}
To interpret these terms, let $\{E_i\}$ denote an orthonormal basis of $\mp$, and let $\{E_i(t)\}$ denote their left-invariant extensions along $g(t)$; that is, $E_i(t)=dL_{g(t)}E_i$.  Then,
\begin{eqnarray*}
\frac{d}{dt}\Big|_{t=0}dL_{g(t)}^{-1}\tilde{Y}(t)
  & = & \frac{d}{dt}\Big|_{t=0}\sum\lb dL_{g(t)}^{-1}\tilde{Y}(t),E_i\rb_0 E_i
    =   \frac{d}{dt}\Big|_{t=0}\sum\lb\tilde{Y}(t),E_i(t)\rb_0 E_i \\
  & = &  \sum\lb\tilde{Y}'(0),E_i\rb_0 E_i + \sum\lb\tilde{Y},E_i'(0)\rb_0 E_i\\
  & = & \tilde{Y}'(0) + \frac 12 \sum\lb\tilde{Y},[\tilde{Z},E_i]\rb_0 E_i \\
  & = & \tilde{Y}'(0) - \frac 12 \sum\lb E_i,[\tilde{Z},\tilde{Y}]\rb_0 E_i\\
  & = & \tilde{Y}'(0) - \frac 12[\tilde{Z},\tilde{Y}].
\end{eqnarray*}

Since $t\mapsto Y(t)$ is a parallel vector field along the geodesic $t\mapsto c(t)$ in $B$, we have $\tilde{Y}'(0) = \widetilde{II}(\tilde{Z},\tilde{Y}) + A_h(\tilde{Z},\tilde{Y})$, where $A_h$ denotes the O'Neill tensor of $h$.  In summary,
\begin{eqnarray*}
\left[\tilde{X},\frac{d}{dt}\Big|_{t=0}dL_{g(t)}^{-1}\tilde{Y}(t)\right]^\mm
   & = & \left[\tilde{X},\tilde{Y}'(0)-\frac 12[\tilde{Z},\tilde{Y}]\right]^\mm\\
   & = & \left[\tilde{X},\widetilde{II}(\tilde{Z},\tilde{Y})+A_h(\tilde{Z},\tilde{Y}) - \frac 12[\tilde{Z},\tilde{Y}]\right]^\mm\\
   & = & \left[\tilde{X},\widetilde{II}(\tilde{Z},\tilde{Y})\right]^\mm.
\end{eqnarray*}
The last equality follows from the fact that $\tilde{X}\in\mp$ while $A_h(\tilde{Z},\tilde{Y})\in\mk$ and $[\tilde{Z},\tilde{Y}]\in\mk$ (because $\mk\subset\mg$ is a symmetric pair).  We similarly have that:
$$ \left[\tilde{Y},\frac{d}{dt}\Big|_{t=0}dL_{g(t)}^{-1}\tilde{X}(t)\right]^\mm
    = \left[\tilde{Y},\widetilde{II}(\tilde{Z},\tilde{X})\right]^\mm.$$
Therefore, since $\alpha\in\mm$, we have:
\begin{equation}\lb (D_Z R^\nabla)(X,Y)W,V\rb = \frac 12\left\langle \left[\tilde{X},\widetilde{II}(\tilde{Z},\tilde{Y})\right]
-\left[\tilde{Y},\widetilde{II}(\tilde{Z},\tilde{X})\right], \alpha\right\rangle_0.\label{parallelcurv}\end{equation}

\begin{proof}[Proof of Proposition~\ref{all}]
By~\cite[Equation 11]{SW}, $\nabla$ is fat if and only if $|R^\nabla(W,V)X|>0$ for all $p\in B$, nonzero $X\in T_pB$ and all \emph{orthonormal} $W,V\in E_p$  (this condition depends only on $\nabla$, even though a metric on $B$ must be chosen in order for the expression $R^\nabla(W,V)X$ and its norm to be defined).  Also notice that $\nabla$ is parallel if and only if $\lb D_ZR^\nabla(X,Y) W,V\rb=0 $ for all $p\in B$, $X,Y\in T_p B$ and \emph{orthonormal} $W,V\in E_p$.  Also notice that Inequality~\ref{ineq} is satisfied if and only if it is satisfied for all \emph{orthonormal} choices of $X,Y,W,V$.

Proposition~\ref{all} now follows from Equations~\ref{m1} and~\ref{parallelcurv}.  Recall that in these equations, $W,V$ were assumed to be orthonormal, and $\alpha\in\mm$ was selected to represent $\text{span}\{W,V\}$ in the sense of Equation~\ref{notnot}.  In fact, the nonzero \emph{decomposable} elements of $o(k)\cong\Lambda^2(\R^k)$ are exactly the elements which represent planes in this manner.
\end{proof}
\section{Totally geodesic classifying maps}\label{S:tg}
In this section, we assume that $\KK=\R$ and that the classifying map $\varphi:B\ra\GR$ is a totally geodesic isometric imbedding.  By Proposition~\ref{all}, this implies that $\nabla$ is parallel.  If additionally $B$ has positive sectional curvature, then $\nabla$ induces a connection metric of nonnegative curvature on $E$~\cite{SW}.

Since $\GR$ contains many totally geodesic submanifolds (which have not yet been fully classified) including many with positive curvature, this might appear to be a hopeful source for topologically new examples of vector bundles which admit metrics of nonnegative curvature.  But we will explain in this section why no new examples can be obtained in this way.

First notice that $B$ is a symmetric space because it is a totally geodesic submanifold of the symmetric space $\GR=G/K$ (here, as before, $H\subset K\subset G$ denotes the triple $O(N-k)\subset O(k)\times O(N-k)\subset O(N)$).  More precisely, $B=G'/K'$ where $G'\subset G$ and $K'=G'\cap K$.  Let $\rho':K'\ra O(k)$ denote the composition of the inclusion map into $K$ with the projection onto the first factor.  The following was observed by Rigas in~\cite{Rigas}.
\begin{prop}[Rigas]\label{rig}The bundle $\pi_E:E\ra B$ is isomorphic to the associated bundle $G'\times_{\rho'}\R^k\ra G'/K'$.
\end{prop}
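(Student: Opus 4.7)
The plan is to exhibit the universal vector bundle itself as an associated bundle, and then check that associated-bundle constructions commute with pullback along the natural inclusion $G'/K' \hookrightarrow G/K$.

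First I would recall that the universal frame bundle is $\Gf = G/H$ as explained earlier, and that $K/H = O(k)$ acts on it by the principal right action. Consequently the universal vector bundle over $\GR$ is the associated bundle $\Gf \times_{O(k)} \R^k$, which can be rewritten as $G \times_\rho \R^k \to G/K$, where $\rho:K \to O(k)$ is the projection onto the first factor of $K = O(k)\times O(N-k)$. Note that $\rho' = \rho|_{K'}$ by the definition of $\rho'$.

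Second, since $\varphi$ is the totally geodesic inclusion induced by $G' \hookrightarrow G$, the pulled-back bundle is the fiber product
\[
E \;=\; \{\,(g'K',\,[g,v]) \in G'/K' \times (G\times_\rho \R^k) \,:\, g'K = gK\,\}.
\]
I would then define
\[
\Phi : G'\times_{\rho'}\R^k \;\longrightarrow\; E, \qquad \Phi([g',v]) \;=\; (g'K',\,[g',v]_\rho),
\]
where $[\cdot]_\rho$ denotes a class in $G\times_\rho \R^k$. Well-definedness is immediate since $\rho'$ is the restriction of $\rho$ to $K'$. The map covers the identity on $B = G'/K'$, so to finish it is enough to show it is a bijection on each fiber (equivalently, a global bijection).

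For surjectivity, given $(g'K',[g,v]_\rho) \in E$, the compatibility $gK = g'K$ gives $g = g'k$ for some $k\in K$, whence $[g,v]_\rho = [g',\rho(k)v]_\rho$, so the pair lies in the image. For injectivity, if $\Phi([g_1',v_1]) = \Phi([g_2',v_2])$ then $g_1'K' = g_2'K'$, so $g_2' = g_1'k'$ for some $k'\in K' = G'\cap K$, and $[g_1',v_1]_\rho = [g_1'k',v_2]_\rho = [g_1',\rho(k')v_2]_\rho$ forces $v_1 = \rho'(k')v_2$, which shows $[g_1',v_1] = [g_2',v_2]$ in $G'\times_{\rho'}\R^k$. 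Thus $\Phi$ is a bundle isomorphism. There is no genuine obstacle here, only bookkeeping; the only point requiring care is to verify that the $K'$-action used in forming $G'\times_{\rho'}\R^k$ agrees on the nose with the restriction of the $K$-action used in the universal bundle, which is guaranteed by $K' = G'\cap K$ and $\rho' = \rho|_{K'}$.
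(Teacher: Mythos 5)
Your proof is correct and follows the same route as the paper: both identify the universal vector bundle with $G\times_\rho\R^k\to G/K$ via the homogeneous-bundle description $K/H\hookrightarrow G/H\to G/K$, and then pull back along the inclusion $G'/K'\hookrightarrow G/K$. The only difference is that you write out explicitly the bijection $\Phi$ and verify it fiberwise, whereas the paper delegates this to the observation that the natural commutative square of inclusions is a pullback square.
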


\begin{proof} Let $\rho:K\ra O(k)$ denote the projection onto the first factor.
The universal principal bundle $O(k)\hookrightarrow\Gf\ra\GR$ was re-described in Equation~\ref{got} as the homogenous bundle $K/H\hookrightarrow G/H\ra G/K.$  This homogeneous bundle can again be re-described as the associated bundle $K/H\hookrightarrow G\times_{\rho}(K/H)\ra G/K.$  This implies that the universal vector bundle over $\GR$ can be re-described as $\R^k\hookrightarrow G\times_\rho\R^k\ra G/K$.  Consider the following commutative diagram, in which the right arrows denote the natural inclusion maps:
$$\begin{CD} G'\times_{\rho'}\R^k @> >> G\times_{\rho}\R^k \\
           @V VV @VV V \\
           G'/K' @>\varphi>> G/K
\end{CD}$$
the bundle on the right is the universal vector bundle over $\GR=G/K$, so the bundle on the left is isomorphic to the pull-back bundle $\pi_E:E\ra B$, as desired.
\end{proof}

In conclusion, if the classifying map $\varphi:B\ra\GR$ is a totally geodesic isometric imbedding, then $\nabla$ is parallel, but in this case the bundle is isomorphic to an associated bundle, which trivially admits a submersion metric of nonnegative sectional curvature.

\section{The cases of $\C^1$ and $\HH^1$ bundles}\label{S:k1}
The primary technical difficulty in applying Proposition~\ref{all} is that ``bracketing with $\alpha$'' is difficult to interpret geometrically in general.  However, in this section, we will provide a very natural geometric interpretation in the special case where $k=1$ and $\KK\in\{\C,\HH\}$ (so that $\varphi:B\ra\CP^{N-1}$ or $\varphi:B\ra\HP^{N-1}$).  Theorems~\ref{P:fat},~\ref{P:par} and~\ref{USA} will follow from this interpretation.

Even though we assumed in the Section~\ref{S:calc} that $\KK=\R$, almost all of the calculations generalize in the obvious way to the cases $\KK\in\{\C,\HH\}$.  The only exception involves the manner in which $\alpha$ was chosen to represent a particular plane in the fiber.  Recall that we were given arbitrary orthonormal vectors $W,V\in\R^k$, and we were able to choose $\alpha\in o(k)$ to represent the plane $\text{span}\{W,V\}$ in the sense that:
$$\lb \beta\cdot W,V\rb =  \lb\beta,\alpha\rb_0
\,\,\,\,\,\,\,\,\,\text{ for all }\beta\in o(k).$$

To generalize this proof to the case $\KK=\C$ (respectively $\KK=\HH$), we would be given arbitrary $\R$-orthonormal vectors $W,V\in\KK^k$, and we would need to choose $\alpha\in u(k)$ (respectively $\alpha\in sp(k)$) so $\lb \beta\cdot W,V\rb_\R =  \lb\beta,\alpha\rb_0$ for all $\beta\in u(k)$ (respectively $\beta\in sp(k)$).  Unfortunately, this is not generally possible unless we additionally assume that $W\perp\text{span}\{\mathfrak{J}V\mid \mathfrak{J}\in\mathcal{J}\}$.

However, when $k=1$, there is no trouble with choosing $\alpha$ as desired.  In fact, the choice $\alpha=V\cdot\overline{W}$ works, and all of the calculations in the previous section go through.

More specifically, when $k=1$ and $\KK=\C$ (so that $\varphi:B\ra\CP^{N-1}$), the chain $H\subset K\subset G$ from Section~\ref{S:calc} becomes $SU(N-1)\subset S(U(1)\times U(N-1))\subset SU(N)$, and $\mm=u(1)$ is spanned by a unique (up to sign) unit-length vector $\alpha\in\mm$.  It is not hard to see that for any $q\in G/K=\CP^{N-1}$, the map $\text{ad}_\alpha:\mp\ra\mp$ (which sends $\tilde{X}$ to $[\alpha,\tilde{X}]$) induces an involution of $T_q(\CP^{N-1})$ that is well-defined in the sense that it is independent of the choice of $g\in h^{-1}(q)$ through which the lift $\tilde{X}$ is defined as in Proposition~\ref{all}.  In fact, this is one natural way in which to define the standard almost complex structure on $\CP^{N-1}$.  Therefore in Proposition~\ref{all}, bracketing with $\alpha$ can be interpreted as applying the almost-complex structure $J$.

When $k=1$ and $\KK=\HH$ (so that $\varphi:B\ra\HP^{N-1}$), the chain $H\subset K\subset G$ from the previous section becomes $Sp(N-1)\subset Sp(1)\times Sp(N-1)\subset Sp(N)$.  Identify $\mathcal{J}=\{I,J,K\}$ with an oriented orthonormal basis of $\mm=sp(1)=\text{Im}(\HH)$. For any $q\in G/K=\HP^{N-1}$, the triple of maps $\text{ad}_I,\text{ad}_J,\text{ad}_K:\mp\ra\mp$ induces a triple of involutions of $T_q(\HP^{N-1})$ which satisfy the familiar properties of an almost quaternionic structure.  Changing to a different $g\in h^{-1}(q)$ has the effect of conjugating to a different oriented orthonormal basis of $\mm$, so the family of triples:
$$\{\{\text{ad}_I,\text{ad}_J,\text{ad}_K\}\mid\{I,J,K\} \text{ is an oriented orthonormal basis of }\mm \}$$
determines a well-defined family of triples of involutions of $T_q\HP^{N-1}$.  This is one way to define the natural almost quaternionic structure on $\HP^{N-1}$.  Recall that on an almost quaternionic manifold, a choice of basis $\{I,J,K\}$ for the almost quaternionic structure generally only exists locally, which is reflected in the dependence on $g\in h^{-1}(q)$ described above.  In any case, bracketing with all possible $\alpha\in\mm$ can be interpreted in Proposition~\ref{all} as applying all possible elements of $\text{span}\{\mathcal{J}\}$.

\begin{proof}[Proof of Theorem~\ref{P:fat}] Recall that $\nabla$ is fat if and only if $|R^\nabla(W,V)X|>0$ for all $p\in B$, nonzero $X\in T_pB$ and all orthonormal $W,V\in E_p$.
If $\varphi$ is not an immersion, then there exits $p\in B$ and $X\in T_pB$ such that $\varphi_*X=0$, which implies that $R^\nabla(W,V)X=0$ for any choice of $W,V$.  Thus, $\nabla$ is not fat.

Next assume that $\varphi$ is an immersion, and choose the pull-back metric for $B$.  Let $\alpha\in\mm$ represent the plane $\text{span}\{W,V\}$ in the sense of Equation~\ref{notnot}.  Let $\mathfrak{J}\in\text{span}\{\mathcal{J}\}$ represent $\alpha$ as described previously in this section.  By Equation~\ref{m1},
$$2\left|R^\nabla(W,V)X\right| = \left|[\tilde{X},\alpha]^\mathfrak{T}\right|_0 = \left|(\mathfrak{J}X)^{\varphi_*(T_pB)}\right|\geq|X|\cdot\cos(\theta(X)),$$
so $\nabla$ is fat if and only if $\theta(X)<\pi/2$ for all nonzero $X\in TM$.
\end{proof}

\begin{proof}[Proof of Theorem~\ref{P:par}]
The connection $\nabla$ is parallel if and only if the following equals zero for all $p\in B$, $X,Y,Z\in T_pB$ and $W,V\in E_p$:
\begin{eqnarray*}2 \lb (D_Z R^\nabla)(X,Y)W,V\rb
   & = & \left\langle \left[\tilde{X},\widetilde{II}(\tilde{Z},\tilde{Y})\right]
        -\left[\tilde{Y},\widetilde{II}(\tilde{Z},\tilde{X})\right], \alpha\right\rangle_0\\
   & = & \left\langle \left[\alpha,\tilde{X}\right],\widetilde{II}(\tilde{Z},\tilde{Y})\right\rangle_0
        -\left\langle \left[\alpha,\tilde{Y}\right],\widetilde{II}(\tilde{Z},\tilde{X})\right\rangle_0\\
   & = & \left\langle \mathfrak{J}X,II(Z,Y)\right\rangle
        -\left\langle \mathfrak{J}Y,II(Z,X)\right\rangle\\
   & = & \left\langle S_{(\mathfrak{J}X)^\perp}Y,Z\right\rangle
         - \left\langle S_{(\mathfrak{J}Y)^\perp}X,Z\right\rangle\\
   & = & \left\langle S_{(\mathfrak{J}X)^\perp}Y - S_{(\mathfrak{J}Y)^\perp}X,Z\right\rangle,
\end{eqnarray*}
where $\alpha\in\mm$ represent the plane $\text{span}\{W,V\}$ in the sense of Equation~\ref{notnot}, and $\mathfrak{J}\in\text{span}\{\mathcal{J}\}$ represents $\alpha$ as described previously in this section. Furthermore, $\nabla$ is radially symmetric if and only if the above is true in the special case $Z=X$.
\end{proof}

Theorem~\ref{USA} follows immediately from the calculations of the previous two proofs.  It remains only to prove Corollary~\ref{bable}.

\begin{proof}[Proof of Corollary~\ref{bable}]
The inequality of Theorem~\ref{USA} is:
$$\lb S_{(\mathfrak{J}X)^\perp}Y - S_{(\mathfrak{J}Y)^\perp}X,X\rb^2\leq k_B(X,Y)\cdot|\text{proj}(\mathfrak{J}X)|^2.$$

The terms of this inequality at a point $p\in B$ are bounded as follows:
\begin{eqnarray*}
\lb S_{(\mathfrak{J}X)^\perp}Y - S_{(\mathfrak{J}Y)^\perp}X,X\rb^2
   & \leq & 4|S(p)|^2\sin^2\theta(p).\\
k_B(X,Y) & = & k_{\KP^N}(X,Y) +\lb II(X,X),II(Y,Y)\rb - |II(X,Y)|^2\\
    & \geq & k_{\KP^N}(X,Y) - 2|S(p)|^2\\
    & \geq & 1/4 - 2|S(p)|^2.\\
|\text{proj}(\mathfrak{J}X)|^2 & \geq & \cos^2\theta(p).
\end{eqnarray*}
Thus, the inequality is satisfied if:
$$4|S(p)|^2\sin^2\theta(p) \leq (1/4 - 2|S(p)|^2)\cos^2\theta(p),$$
which can be re-expressed as:
$$|S(p)|^2\leq\frac{1}{16\tan^2\theta(p) + 8}.$$
\end{proof}

\section{Well studied classes of immersions into $\CP^N$ and $\HP^N$}\label{S:lit}
Theorems~\ref{P:fat},~\ref{P:par} and~\ref{USA} empower one to construct connections with certain natural geometric properties in $\KK^1$-vector bundles over $B$ (with $\KK\in\{\C,\HH\}$) by constructing immersions of $B$ into $\KP^{N-1}$ that satisfy certain hypotheses.  There is a large body of literature on immersed submanifolds of projective spaces with natural properties.  Some of these properties overlap the hypotheses required by our theorems.  In this section, we review some of the literature and and discuss it's relevance to the search for nice connections in $\KK^1$ bundles.

As mentioned previously, if $\varphi$ is an isometric K\"ahler/quaternionic immersion, then $\nabla$ is fat and parallel.  If additionally $B$ has positive curvature, then the inequality is strictly satisfied, so $\nabla$ induces a connection metric of nonnegative curvature in $E$ and of positive curvature in $E^1$.  Since these are strong conclusions, it is worthwhile to begin by surveying some of the relevant literature on isometric K\"ahler/quaternionic immersions.  We start with the case $\KK=\C$.

Calabi's rigidity theorem from~\cite{Calabi} says that if $f_1,f_2:B\ra\CP^N$ are both isometric K\"{a}hler immersions, then there exists a unitary transformation $U$ of $\CP^N$ such that $f_2=U\circ f_1$.  Calabi further classified all isometric K\"{a}hler imbeddings of $\CP^n(c_1)$ into $\CP^N(c_2)$, where $c_1,c_2$ denote the constant holomorphic curvature.  For any fixed $n$, he proved there exists a countably infinite family of imbeddings, $f_i:\CP^n(c/i)\ra\CP^{N_i}(c)$, where $i\in\Z^+$ and $N_i=\frac{(n+i)!}{n!i!}-1$.  The map $f_i$ is sometimes called the $i^{\text{th}}$ Veronese imbedding. It is not totally geodesic if $i>1$.  Each $f_i$ induces a parallel fat connection in the pulled-back $\C^1$-bundle over $\CP^n$, and the total space, $E_i^1$, of the corresponding circle-bundle inherits a connection metric of positive sectional curvature.

These examples are not new.  The main result of~\cite{parallel} implies that any vector bundle over the symmetric space $\CP^n=SU(n+1)/S(U(1)\times U(n))$ with a parallel connection must be isomorphic to an associated bundle, which means it has the form $SU(n+1)\times_{\rho}\C\ra \CP^n$ for some representation $\rho:S(U(1)\times U(n))\ra U(1)$.  There is a one parameter family of such representations coming from powers of the determinant of $A\in U(n)\cong S(U(1)\times U(n))$: $\rho_j(A)=\det(A)^j$.  The total space of each circle bundle therefore has the following form for some $j\in\Z^+$:
\begin{equation}\label{franklin}E^1_j=SU(n+1)\times_{\rho_j}U(1),\end{equation}
which can be shown to be diffeomorphic to the lens space $S^{2n+1}/\Z_j$.  

Nakagawa and Takagi in~\cite{NT} classified the isometric K\"{a}hler imbeddings of all other compact simply connected irreducible Hermitian symmetric space into $\CP^N$.  For each such space, they obtained a countably infinite families of imbeddings analogous to the Veronese imbeddings.  Notice that $\CP^n$ is the only such space with positive curvature, so no new examples of connection metrics with positive curvature in circle bundles could be obtained by pulling back the universal connection via these imbeddings.  As above, these pulled-back connections are parallel and fat, and the bundles are associated bundles.

More recently, Di Scala, Ishi, and Loi studied isometric K\"ahler immersions of the form $f:B\ra\CP^N(1)$, where $B$ is a homogeneous K\"ahler manifold~\cite{DIL}.  They proved that $f$ must be an imbedding and that $B$ must be simply connected.  Moreover, they conjectured that (some rescaling of) any simply connected homogeneous K\"ahler manifold, $B$, whose associated K\"ahler form is integral must admit an isometric K\"ahler imbedding into $\CP^N(1)$ for some $N$.  This conjecture would imply that over each such space there exists a $\C^1$-bundle that admits a parallel fat connection.

There is no classification of the isometric K\"ahler immersions $f:B\ra\CP^N(1)$ for which $B$ has positive or nonnegative sectional curvature, except under added hypotheses.  For example, If $B$ complex dimension $\geq 2$ and sectional curvature $>1/8$, then Ros and Verstraelen proved that $f(B)$ must be totally geodesic~\cite{RV}.  If $B$ has positive sectional curvature and has holomorphic curvature $\geq 1/2$, then Ros proved that $f$ must be a one of a list of standard imbeddings~\cite{Ros}.  If $B$ has nonnegative sectional curvature and has complex codimension less than its complex dimension, then Shen obtained a similar conclusion~\cite{Shen}.  But without any added hypotheses, no classification is known.  Any new example would be interesting, especially if $B$ had positive sectional curvature, for then the pulled back circle bundle over $B$ would inherit positive sectional curvature as well.

There are other conditions on immersions (more general than the K\"ahler/quaternionic condition) that imply bounded Wirtinger angles. For example, an immersion $\varphi:B\ra\KP^N$ is called called \emph{slant} if $\theta$ is constant, or \emph{semi-slant} if its tangent bundle decomposes into two sub-bundles on which $\theta$ is constant respectively at zero and at another value.  Slant submanifolds were defined by Chen, who summarized the early results in his book~\cite{chen}.  The pullback of any proper slant (or semi-slant) immersion would yield a $\KK^1$ bundle with a fat connection.  The literature on slant and semi-slant submanifolds consists primarily of rigidity results.  However, Maeda, Ohnita and Udagawa in~\cite{MOU} constructed examples of slant submanifolds of $\CP^N$, including families of proper slant imbeddings of $\CP^n$ into $\CP^N$ which generalize the Veronese imbeddings.

\bibliographystyle{amsplain}

\end{document}